\theoremstyle{plain} 
\newtheorem{theorem}{\indent\sc Theorem}[section]
\newtheorem{lemma}[theorem]{\indent\sc Lemma}
\newtheorem{proposition}[theorem]{\indent\sc Proposition}
\theoremstyle{definition} 
\newtheorem{remark}[theorem]{\indent\sc Remark}
\title{Scaling limit of the random walk on a Galton-Watson tree with regular varying offspring distribution} 
\author{
%
%
\textsc{Dongjian Qian, Yang Xiao} 
}
\date{} 
\def\d{\mathrm{d}}
\def\TT{\mathbb{T}}
\begin{document}

\maketitle
\begin{abstract}
    We consider a random walk on a Galton-Watson tree whose offspring distribution has a regular varying tail of index $\kappa\in (1,2]$. We prove the convergence of the renormalised height function of the
    walk towards the continuous-time height process of a spectrally positive strictly stable
    Lévy process, jointly with the convergence of the renormalised trace of the walk towards
the continuum tree coded by the latter continuous-time height process.
\end{abstract}  

\section{Introduction}
 Let $\mathbb{T}$ be a Galton-Watson tree with root $\rho$. For every vertex $u$ of $\mathbb{T}$, denote by $\nu(u)$ the number of its children, which are i.i.d random variables with the generic law $\nu$. Throughout the article, we assume that the tail distribution of $\nu$ is regularly varying with index $\kappa\in (1,2]$ i.e.
\begin{align*}
    \mathbb P(\nu>x){\sim} x^{-\kappa}l(x),
\end{align*}
where $l(x)$ is a slowly varying function and we say $f(x)\sim g(x)$ if 
\begin{align*}
    \lim\limits_{x\rightarrow \infty} \frac{f(x)}{g(x)}=1.
\end{align*}
We may and will assume that $l(x)$ is a positive constant when $x$ is small. We see that $\nu$ has a finite moment of order $\alpha$ for $\alpha\in (0,\kappa)$. In particular, let $m:=E[\nu]<\infty$ be the expectation of reproduction. We assume $m>1$ so that $\TT$ has a positive probability to be infinite by knowledge of branching processes. 

For a vertex $u$ in the tree, we will denote by $|u|$ its generation, by $u^*$ its parent, and by $c(u)$ the set of its children. We also set $u_0,u_1,\dots u_{|u|-1}$ as the ancestors of $u$ at generations $0,1,\dots |u|-1$. For $u,v\in \mathbb{T}$, we write $u\leq v$ when $u$ is an ancestor of $v$ and $u<v$ when $u\le v$ and $u\neq v$. Let $\TT_u:=\{ v\in T; u\leq v\}$ be the subtree of $\TT$ rooted at $u$.  

The collection of all the Galton-Watson trees is equipped with the probability measure  $\mathbb{P}$, and with $\mathbb{E}$ as the corresponding expectation. Since $\TT$ has a positive probability to be infinite, we let $\mathbb{P}^*$ be the measure conditioned on survival, that is 
\begin{align*}
    \mathbb{P}^*(\cdot)=\mathbb{P}(\cdot| \TT \text{ is infinite}).
\end{align*}

Now we define the $\lambda$-baised random walk $(X_n)_{n\geq 0}$ on a fixed tree $\TT$. The walk starts from the root $\rho$, with the transition probability as follows:
\begin{align*}
    &P^{\TT}(X_{n+1}=u^*|X_n=u)=\dfrac{\lambda}{\lambda+\nu(u)};\\
    &P^{\TT}(X_{n+1}=v|X_n=u)=\dfrac{1}{\lambda+\nu(u)},\quad \forall v\in c(u). 
\end{align*}
For the root, we add an extra parent $\rho^*$ to it and suppose that the random walk is reflected at $\rho^*$. We let $P^\TT$ (resp. $E^\TT$) denote the law (resp. expectation) of the random walk conditioned on $\TT$, which is called the quenched law. Another measure $P$, often referred to as the annealed law, is obtained by averaging the quenched law over $\mathbb{P}$. We set $E$ as the annealed expectation. Also, we denote by $P^*$ (resp. $E^*$) the annealed probability (resp. expectation) conditioned on non-extinction. 

On the topic of $\lambda$-baised random walk on a Galton-Watson tree, it is first proved in \cite{lyons1995ergodic} that, under $P^*$, $(X_n)_{n\geq 0}$ is transient when $\lambda>m$, positive recurrent when $0<\lambda<m$ and null recurrent when $m=\lambda$. In the critical case, namely $\lambda=m$, Y.Peres and O.Zeitouni prove in \cite{peres2008central} that a central limit theorem on the height function of the walk holds, under some moments condition on the offspring distribution. This theorem is extended by A.Dembo and N.Sun in \cite{dembo2012central} to the case of multi-type Galton-Watson tree, under some weaker hypotheses. 

In \cite{aidekon2017scaling}, L.Raph\'elis and E.A\"id\'ekon consider the trace of the $\lambda$-baised walk at time $n$, that is the subtree of $\TT$ made up the vertices visited until time $n$. They consider the graph $\mathcal{R}_n:=(\TT^n,d_{\TT_n})$ where $\TT^n:=\{u\in \TT:\exists k\leq n, X_k=u\}$ is regarded as a metric space, equipped with the natural graph distance $d_{\TT_n}$. The trace can also be considered as an unlabelled tree, with the edges inherited from $\TT$. It is proved in \cite{aidekon2017scaling} that when $\lambda=m>1$, and $\nu$ has finite variation, the following convergence holds both under $P^*$ (annealed case) and under $P^\TT$ for $\mathbb{P}^*$-a,e. tree (quenched case):
\begin{align}\label{eq_convergence_BM}
    \dfrac{1}{\sqrt{\sigma^2n}}\left(\mathcal{R}_n,(X_{[nt]}\right)_{t\in [0,1]})\overset{n\rightarrow\infty}{\Longrightarrow} \left(\mathcal{T}_{(B_t)_{t\in [0,1]}},(B_t)_{t\in [0,1]}\right),
\end{align}
where $\sigma^2:={m(m-1)}/{\mathbb{E}[\nu(\nu-1)]}$. Here $(B_t)_{t\in [0,1]}$ is a reflected Brownian motion in time interval $[0,1]$ and $\mathcal{T}_{(B_t)_{t\in [0,1]}}$ is a real tree encoded by the Brownian motion. The convergence holds in law for the pointed Gromov-Hausdorff topology on metric spaces and Skorokhod topology on c\'adl\'ag functions. The concept of convergence of real trees can be found, for example, in \cite{le2005random}. 

We are interested in the case when $\nu$ does not have the second moment. The $\lambda$-baised random walk can be viewed as a random walk in random environment, where in fact no randomness is introduced in the environment. More precisely, the potential of a vertex $u$ in the first generation of $\TT$, denoted by $\triangle V(u)$, is $\ln(\lambda)$ with probability 1. The Laplace transform of the point process of the potential is defined for $t\geq0$ as
\begin{align*}
    \psi(t):=\mathbb{E}\left[\sum\limits_{|u|=1}e^{-tV(u)}\right]=\mathbb{E}\left[\frac{\nu}{\lambda^t}\right].   
\end{align*}
In \cite{de2022scaling}, L.Raph\'elis proves that under some hypothesis (the hypothesis $(\mathbf H_c)$ and $(\mathbf H_\kappa)$ in the article) on $\psi$, the height function of the walk $(|X_n|)_{n\geq 0}$ and the trace $(\mathcal{R}_n)_{n\geq 0}$ converge jointly towards the continuous-time height process of a spectrally positive stable L\'evy process of index $\kappa$ and L\'evy forest coded by it. The proof of the main theorem in that article relies on the theory about the convergence of height process and Levy trees constructed in \cite{duquesne2002random}.

The results in \cite{de2022scaling} do not cover the $\lambda$-biased random walk. On one hand, in that article, \cite[Theorem B]{goldie1991implicit} is applied as a main tool to get the regularly varying tail, which requires the non-lattice condition. On the other hand, the condition $(\mathbf H_\kappa)$ in that article is not satisfied for general $l(x)$, even when $l(x)$ is a constant. Indeed, we cannot guarantee $ \mathbb P(\nu>x){\sim} x^{-\kappa}l(x)$ and $\mathbb E[\nu^{\kappa}]<\infty$ at the same time. It is not just a technique problem, as will be shown in the Section \ref{Sec_4}, that the regularly varying tail of a key random variable comes from a totally different place. 

From now on, we assume $\lambda=m$. We also define
\begin{align*}
    a_n:=\inf\{x:P(v>x)\leq n^{-1}\},
\end{align*}
which is the proper scale as is shown in \cite[Theorem 3.7.2]{durrett2019probability}. When $l(x)=1$, we can take $a_n=n^{1/\kappa}$. 

\begin{theorem}\label{Thm_mian}
    Suppose the offspring distribution $\nu$ is regularly varying with index $\kappa$ for some $\kappa\in (1,2)$. There exists a constant $C^*\in (0,\infty)$ such that the following convergence holds in law under $P^*$ or $P^T$ for $\mathbb{P}^*$-a.e. trees:

 \begin{align*}
        \dfrac{a(n)}{n} \bigg( (|X_{[nt]}|)_{t\geq 0},\mathcal{R}_{n}\bigg)\overset{n\rightarrow\infty}{\Longrightarrow} C^*  \bigg( (H_t)_{t\geq 0}, \mathcal{T}_{(H_t)_{0\leq t\leq 1}}\bigg),
    \end{align*}
where $(H_t)_{t\geq 0}$ is the continuous-time height process of a strictly stable spectrally positive L\'evy process of index $\kappa$ and $\mathcal{T}_{(H_t)_{0\leq t\leq 1}}$ is the real tree coded by $(H_t)_{0\leq t\leq 1}$. The convergence holds in law for Skorokhod topology on c\'adl\'ag functions and the pointed Gromov-Hausdorff topology on metric spaces. 
    
    In particular, when $l(x)$ is a constant $C_l$, we have 
    \begin{align*}
         \dfrac{1}{n^{1-\frac{1}{\kappa}}} \bigg( (|X_{[nt]}|)_{t\geq 0},\mathcal{R}_{n}\bigg)\overset{n\rightarrow\infty}{\Longrightarrow} C^{**}  \bigg( (H_t)_{t\geq 0}, \mathcal{T}_{(H_t)_{0\leq t\leq 1}}\bigg),
    \end{align*}
    where $C^{**}=C^{*}C_l^{-1/\kappa}$.
\end{theorem}
\begin{remark}
 The constant $C^*$ can be computed explicitly: 
\begin{align*}
   C^*=\left(C_0|\Gamma(1-\kappa)|\right)^{-\frac{1}{\kappa}}2^{-\frac{\kappa-1}{\kappa}}\left(\frac{m-1}{m}\right)^{-\frac{2}{\kappa}},
\end{align*}
where $C_0$ is given in Proposition \ref{Prop_rvofL1}.
\end{remark}
 When $\kappa=2$, we define the truncated moment function
    \begin{align*}
        \mu(x):=E_1\left[(L^1-1)^2 1_{\{L^1\leq x\}}\right],
    \end{align*}
    where the definitions of the measure $E_1$ and the random variable $L^1$ appear in Section \ref{Sec_4}. We show in Remark \ref{Remark_kappa_2} that if $E_1[(L^1)^2]=\infty$,
    \begin{align*}
        \mu(x)\sim 2C_0 \int_{0}^x y^{-1}l(y)\d y
    \end{align*}
 which is slowly varying by \cite[Proposition 1.5.9 a]{bingham1989regular}. Note that the tail of $\mu(x)$ is different from that of $l(x)$. In fact,
    \begin{align*}
        \lim\limits_{x\rightarrow\infty} \frac{1}{l(x)}\int^x_0 t^{-1}l(t)\d t=\infty.
    \end{align*}
    We define $a^\prime(n)$ as the function of $n$ satisfying
   \begin{align*}
      \dfrac{n\mu(a^\prime(n))}{(a^\prime(n))^2}\rightarrow C_a
   \end{align*}
   for some constant $C_a$. Then we can get the parallel result for $\kappa=2$. The proof is quite the same as the case $1<\kappa<2$, so we will omit it.
   \begin{theorem}\label{Thm_2}
       Suppose the offspring distribution $\nu$ is regularly varying with index $\kappa=2$. There exists a constant $C^*\in (0,\infty)$ such that the following convergence holds in law under $P^*$ or $P^T$ for $\mathbb{P}^*$-a.e. trees:
          \begin{align*}
     \dfrac{a^\prime(n)}{n} \bigg( (|X_{[nt]}|)_{t\geq 0},\mathcal{R}_{n}\bigg)\overset{n\rightarrow\infty}{\Longrightarrow} C^*  \bigg( (|B_t|)_{t\geq 0}, \mathcal{T}_{|B_t|_{0\leq t\leq 1}}\bigg),
   \end{align*}
where $(B_t)_{t\geq 0}$ is a standard Brownian motion and 
\begin{align*}
    C^*=\left(\frac{C_a}{2}\right)^{-1/2}\frac{m}{m-1}. 
\end{align*}
In particular, when $l(x)$ is a constant $C_l$, we have 
    \begin{align}\label{eq_convergence_BM1}
         \dfrac{1}{(n\ln^{-1}(n))^{\frac{1}{2}}} \bigg( (|X_{[nt]}|)_{t\geq 0},\mathcal{R}_{n}\bigg)\overset{n\rightarrow\infty}{\Longrightarrow} C^*  \bigg( (|B_t|)_{t\geq 0}, \mathcal{T}_{|B_t|_{0\leq t\leq 1}}\bigg),
    \end{align}
with $C^*=(C_0C_l)^{-1/2}m/(m-1)$. 
   \end{theorem}
   
\begin{remark}
    When $\nu$ has the second moment, which is equivalent to $x^{-1}l(x)$ is integrable, $\mu(x)$ is bounded. Thus $a^\prime(n)=\sqrt{n}$ and $C_a=E_1[(L^1-1)^2]$. It can be easily checked that (\ref{eq_convergence_BM1}) degenerates to (\ref{eq_convergence_BM}).
\end{remark}

The article is arranged as follows. In Section 2, a global strategy is given and the details are omitted, which can be referred to in \cite{de2022scaling}. The theorem then boils down to the check of a hypothesis $(\mathbf H_l)$ in that section. In Section 3, two equivalent change of measure of the trace is introduced, which is used in the following proof. Then, the whole Section 4 is devoted to proving the hypothesis $(\mathbf H_l)$. The Subsection 4.2 is core part and the main contribution of the paper. In that section, we apply the Laplace transform and dominated convergence theorem to obtain regular variation of the tail of a key random variable and complete the proof. 

\section{Overview of the proof}
We first show an annealed version of the main theorem for random walks on forests. Denote $\mathbb{F}$ a forest made up of a collection i.i.d Galton-Watson trees $(\mathbb{T}_i)_{i\geq 1}$ with corresponding roots $(\rho_i)_{i\geq1}$. Let $(X^{\mathbb{F}}_n)_{n\geq 0}$ be a nearest-neighbour random walk  on $\mathbb{F}$ starting from $\rho_1$ with the transition probability:
\begin{align*}
    &P^{\mathbb{F}}\left(X^{\mathbb{F}}_{n+1}=v|X^{\mathbb{F}}_n=u\right)=\frac{1}{m+\nu(u)},\quad &v\in c(u);\\
    &P^{\mathbb{F}}\left(X^{\mathbb{F}}_{n+1}=u^*|X^{\mathbb{F}}_n=u\right)=\frac{m}{m+\nu(u)},\quad &u\neq \rho_i, \, \forall i;\\
    &P^{\mathbb{F}}\left(X^{\mathbb{F}}_{n+1}=\rho_{i+1}|X^{\mathbb{F}}_n=u\right)=\frac{m}{m+\nu(u)},\quad &u=\rho_i.\\
\end{align*}
The behavior of the random walk on $\mathbb{F}$ is similar to that on $\mathbb{T}$ except when it is on the roots. Let
\begin{align*}
    \mathbf{F}:=\{u\in \mathbb{F}, \exists n\geq 0, X^{\mathbb{F}}_n=u \}
\end{align*}
be the sub-forest made up with vertices once visited by the random walk. It is made up of i.i.d multi-type trees $(\mathbf{T}_k)_{k\ge 1}$. As there should be no ambiguity on the context hereafter, we will simply denote the walk by $(X_n)_{n\geq 0}$ and still denote by $\mathcal{R}_n$ the set of vertices of $\mathbb{F}$ visited up to time $n$.

\begin{proposition}\label{Prop_mian}
    Suppose ${\nu}$ is regularly varying with index $\kappa$ with $\kappa\in (1,2)$. Then the following convergence holds in law under $P$:
    \begin{align*}
        \dfrac{a(n)}{n} \bigg( (|X_{[nt]}|)_{t\geq 0},\mathcal{R}_{n}\bigg)\overset{n\rightarrow\infty}{\Longrightarrow} C^*  \bigg( (H_t)_{t\geq 0}, \mathcal{T}_{(H_t)_{0\leq t\leq 1}}\bigg)
    \end{align*}
    where $(H_t)_{t\geq 0}$ is the continuous-time height process of a strictly stable spectrally positive L\'evy process of index $\kappa$. The convergence holds in law for Skorokhod topology on c\'adl\'ag functions and the pointed Gromov-Hausdorff topology on metric spaces.  
\end{proposition}

Theorem \ref{Thm_mian} then follows from Proposition \ref{Prop_mian} by \cite[Section 5]{de2022scaling}. Note that, in \cite[Section 5]{de2022scaling}, the only requirement on $\nu$ used in the proof is $\psi((\kappa+1)/2)=m^{(1-\kappa)/2}<1$, which is obviously satisfied.

The outline of proof of Proposition \ref{Prop_mian} is the same as that in \cite{de2022scaling}. First, the trace $\mathcal{R}_n$ can be seen as multi-type Galton-Watson forest. Then $(|X_n|, \mathbf{F})$ is associated with two leafed Galton-Watson forests with edge lengths, denoted by $ \mathbf{F}^R$ and $ \mathbf{F}^X$. The height process of $ \mathbf{F}^X$  is equal to $(|X_n|)_{n\geq 0}$ and the height process of of $ \mathbf{F}^R$ is equal to that of $ \mathbf{F}$. Next, we state a result on the associated height process of such forests, under certain hypotheses. At last, we conclude Proposition \ref{Prop_mian} provided $\mathbf{F}^R$ and $\mathbf{F}^X$ satisfy the hypotheses. 

\subsection{Reduction of trees}
 In this section, we briefly introduce the reduction of trees. One can see \cite[Section 2.1]{de2022scaling} for details. For every $u\in \mathbf{F}$, which is not a root, we denote by $\beta(u)$ the edge local time of $u$:
\begin{align*}
    \beta(u):=\sharp\{n\geq 0:X_n=u^*, X_{n+1}=u \}, 
\end{align*}
which is the number of visits of $(X_n)_{n\geq 0}$ from $u^*$ to $u$. If $u$ is a root, set $\beta(u)=1$. Since in our case, $(X_n)_{n\geq 0}$ is recurrent, it's given in \cite{aidekon2017scaling} that:
\begin{lemma}\cite[Lemma 3.1]{aidekon2017scaling}
    Under the annealed law $P$, the marked forest $(\mathbf{F},\beta)$ is a multi-type Galton-Watson forest with roots of initial type 1. 
\end{lemma}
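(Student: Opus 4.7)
The plan is to establish the branching property by combining the strong Markov property of the walk with the recursive structure of the underlying Galton--Watson forest. Because the walk leaves $\rho_i$ toward $\rho_{i+1}$ exactly once and never returns, the strong Markov property at these exit times already yields that the marked restrictions $(F\cap\TT_i,\beta|_{\TT_i})$ are i.i.d.\ across $i$, so it suffices to prove, inside a single tree $\TT=\TT_i$, that $(F\cap\TT,\beta|_{\TT})$ is a multitype Galton--Watson tree whose root carries type $1$ (the unique entry from the reflecting pseudo-parent). For this I need two facts, first quenched and then averaged: (i) for every vertex $u\in F$ of type $k=\beta(u)$, the joint law of the types of the children of $u$ in $F$ depends only on $k$; (ii) the sub-traces dangling below distinct children of $u$ are conditionally independent given these types.

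For (i), fix $u$ and set $d=\nu(u)$. Every time the walk is at $u$, its next step is an i.i.d.\ draw assigning mass $m/(m+d)$ to $u^*$ and $1/(m+d)$ to each child; an excursion initiated by a visit from $u^*$ ends the first time the draw returns the label $u^*$. Concatenating the $k$ excursions corresponding to the $k$ visits to $u$ from $u^*$, the vector $(\beta(v))_{v\in c(u)}$ equals the vector of per-child counts, in an i.i.d.\ multinomial sequence with the above weights, stopped at the $k$-th appearance of the label $u^*$. This gives an explicit combinatorial formula for the quenched conditional law of $(\beta(v))_{v\in c(u)}$ given $(k,d)$, and shows that it depends on $\TT$ only through $d$. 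Since $\nu(u)$ is independent of the subtrees $(\TT_v)_{v\in c(u)}$ by the Galton--Watson property, averaging against the law of $\nu$ produces an annealed offspring distribution that is a function of $k$ alone.

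For (ii), I would apply the strong Markov property at each return of the walk to $u$: once the counts $(\beta(v))_{v\in c(u)}$ are fixed, each visit to $u$ is followed by an independent multinomial step, so the trajectories inside the disjoint subtrees $(\TT_v)$ interact only through the entrance counts $\beta(v)$. Hence, conditional on these counts, the walks in the various $\TT_v$'s are independent, each distributed as the analogous walk on a fresh Galton--Watson subtree entered from its root exactly $\beta(v)$ times; combined with the i.i.d.\ structure of $(\TT_v)_{v\in c(u)}$ under $\mathbb{P}$ and an induction on generations, this packages everything into the multitype Galton--Watson statement. The most delicate step I would write with care is exactly this independence argument: one must verify that no information beyond the entrance count $\beta(v)$ leaks from the interior of one subtree into a sibling through the bottleneck $u$, which is a consequence of the strong Markov property at each visit of the walk to $u$ but demands clean bookkeeping of the $\sigma$-fields generated by the successive excursions; once that is in place, integrating against $\mathbb{P}$ delivers the annealed conclusion.
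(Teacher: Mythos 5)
Your proof is correct and follows essentially the same route as the one in the cited reference (\cite[Lemma 3.1]{aidekon2017scaling}), which the paper itself reprises in Section 3.1: the negative multinomial quenched law of $(\beta(v))_{v\in c(u)}$ given $\beta(u)=k$ and $\nu(u)=d$, the independence of $\nu(u)$ from the subtrees by the Galton--Watson property, and the strong Markov property (i.i.d.\ steps at each visit to a vertex) to decouple the excursions in disjoint sibling subtrees given the entrance counts. The only point worth stating explicitly is that recurrence of the walk on a single tree (which holds since $\lambda=m$) is what guarantees each tree of the forest is fully explored in finite time, so that the edge-local-time counts are well-defined and the walk indeed moves on from $\rho_i$ to $\rho_{i+1}$.
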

Leafed Galton-Watson forests with edge lengths are multi-type Galton-Watson forests that every vertex has two types, 0 and 1, and only vertices of type 1 may give progeny. We can denote it by a triplet $(\mathbf{F},e,l)$ where for $u\in \mathbf{F}$, $e(u)\in \{0,1\}$ stands for the type of $u$ and $l(u)\in \mathbb R^+$ is the length of edge joining $u$ with its parent. 

We can define its associated weighted height process $H^l_F$: if $u(n)$ is the $n$-th vertex of $\mathbf{F}$ in the lexicographical order, then
\begin{align*}
    H^l_F(n)=\sum\limits_{\rho\leq v\leq u(n)}l(v). 
\end{align*}
We also define the notion of the optional line of a given type. Let $\mathcal{B}^1_u$ be the set of vertices descending from $u$ in $\mathbf{F}$ having no ancestor of type $1$ since $u$. Formally, 
\begin{align*}
    \mathcal{B}^1_u:=\{ v\in F: u<v,\, \beta(w)\neq 1 ,\,\forall u<w<v \}.
\end{align*}
Also, denote by $\mathcal{L}_u^1$ the set of vertices of type $1$ descending from $u$ in $\mathbf{F}$ and having no ancestor of type $1$ since $u$. Formally,
\begin{align*}
    \mathcal{L}^1_u:=\{ v\in F: u<v,\, \beta(v)=1,\, \beta(w)\neq 1 ,\,\forall u<w<v \}.
\end{align*}
Denote by $L^1_u$ (resp. $B^1_u$) the cardinality of $ \mathcal{L}^1_u $ (resp. $\mathcal{B}^1_u$).

Now, let us briefly introduce the forests $\mathbf{F}^R$ and $\mathbf{F}^X$. Each component $\mathbf{T}_k^R$ (resp. $\mathbf{T}^X_k$) of $\mathbf{F}^R$ (resp. $\mathbf{F}^X$) is built from $\mathbf{T}_k$ of $\mathbf{F}$. We just introduce the notion without giving any intuition. $\mathbf{T}^R_k$ is built as follows:
\begin{enumerate}
    \item \textbf{Initialisation}

    Generation 0 of $\mathbf{T}^R_k$ is made up of $\rho_k$ of $\mathbf{T}_k$. Set $l(\rho_k)=0$ and $e(\rho_k)=1$.
    \item \textbf{Induction}

    Suppose that generation $n$ of $\mathbf{T}^R_k$ has been built. If generation $n$ is empty, then generation $n+1$ is empty. Otherwise, by construction, each $u\in \mathbf{T}^R_k$ with $|u|=n$ and $e(u)=1$ was associated with a vertex $u^\prime$ in $\mathbf{T}_k$. Every time we take in the lexicographical order a vertex $v^\prime\in \mathbf{T}_k$ such that $v^\prime\in \mathcal{B}^1_{u^\prime}$, we add a vertex $v$ as a child of $u$ in $\mathbf{T}^R_k$ correspondingly, thus forming the progeny of $u$. Set $e(v)=1$ if $\beta(v^\prime)$=1 and $e(v)=0$ otherwise. Let $l(v)=|v^\prime|-|u^\prime|$.
\end{enumerate}

Next, we continue to build $\mathbf{F}^X$ on the basis of $\mathbf{F}^R$. For a vertex $v$ of type $0$, which is visited by $(X_n)_{n\geq 0}$ for $k_v$ times, add $k_v-1$ siblings and attach them to the parent of $v$. They all have type 0 and the same length as $v$. For vertices $u$ of type 1, which has been visited by $(X_n)_{n\geq 0}$ from its children $k_u$ times, attach $k_u$ vertices of type $0$, length $0$ to $u$. Since each vertex in the forest corresponds to one step of $(X_n)_{n\geq 0}$, we reorder the vertices in each set of sibling according to the time of visit of the step to which they correspond. Thus we complete the construction of forest $\mathbf{F}^X$.

\subsection{Convergence of the height process associated with $\mathbf{F}^R$ and $\mathbf{F}^X$}

Let $(\mathbf{F},e,l)$ be a leafed Galton-Watson forest with edge length. We denote by $\nu(u)$ (resp. $\nu^1(u)$) the total number of children (resp. number of children of type 1) of $u\in \mathbf{F}$. Set $\mathbf{F}^1$ as the forest limited to its vertices of type $1$. We make the following hypotheses on the reproduction law:
\begin{equation*}
(\mathbf H_l)
\left\{ 
\begin{aligned}
    &(i)\, E\left[\nu^1\right]=1;\\
    &(ii)\, \exists \epsilon>0, \,\text{s.t.} \, E\left[(\nu)^{1+\epsilon}\right]<\infty;\\
    & (iii)\, \exists C_0>0, \,\text{s.t.} \, P(\nu^1>x)\sim C_0 l(x)x^{-\kappa};\\
    & (iv) \,\exists r>1, \,\text{s.t.} \, E\left[\sum\limits_{|u|=1}r^{l(u)}\right]<\infty.
\end{aligned}
\right.    
\end{equation*}
Let $m$ be the expectation of reproduction and $\mu$ be the expected sum of the length of type $1$ children, i.e.
\begin{align*}
    m:=E\left[\sum\limits_{|u|=1}1\right]=E\left[\nu\right];\quad
    \mu:=E\left[\sum\limits_{|u|=1,e(u)=1}l(u)\right].
\end{align*} 
Also, define for $\forall n\in \mathbb{N}$,
\begin{align*}
    H^1_F(n):=|u^1(n)|,\quad
    H^l_F(n):=\sum\limits_{\rho\leq v\leq u(n)}l(v).
\end{align*}
where $u(n)$ (resp $u^1(n)$) is the $n$-th vertex of $\mathbf{F}$ (resp. $\mathbf{F}^1$) taken in the lexicographical order. The next theorem shows the convergence of $H^l_F$ and $H^1_F$ under $(\mathbf{H}_l)$.
\begin{theorem} \cite[Theorem 2]{de2022scaling}\label{thm2}
    Let $(\mathbf{F},e,l)$ be a leafed Galton-Watson forest with edge lengths, with offspring distribution satisfying hypothesis $(\mathbf{H}_l)$. The following convergence holds in law:
        \begin{align*}
        \dfrac{a(n)}{n} \bigg( \left( H^l_F([ns])\right)_{s\geq 0}  , \left( H^1_F([ns])\right)_{s\geq 0}   \bigg) \overset{n\rightarrow\infty}{\Longrightarrow} \dfrac{1}{(C_0|\Gamma(1-\kappa)|)^{\frac{1}{\kappa}}}\bigg(\left(\mu H_{\frac{s}{m}}\right)_{s\geq 0},(H_s)_{s\geq 0}\bigg),
    \end{align*}
where $C_0$ is the constant defined in Proposition \ref{Prop_rvofL1} and $H$ is the continuous-time height process of a spectrally positive L\'evy process $Y$ with Laplace transform $E[\exp(-\lambda Y_t)]=\exp(t\lambda^{\kappa})$ for $\lambda,t>0$.
\end{theorem}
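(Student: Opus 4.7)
The plan is to reduce the statement to the Duquesne--Le Gall convergence theorem for the height processes of critical Galton--Watson forests with regularly varying offspring distribution, and then to bootstrap this unweighted convergence into the joint convergence with the weighted height process.

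First, I would focus on the subforest $F^1\subset F$ of type-$1$ vertices. By construction $F^1$ is itself a Galton--Watson forest, with offspring distribution $\nu^1$. Hypothesis $(\mathbf H_l)(i)$ makes it critical, while $(\mathbf H_l)(iii)$ makes $\nu^1$ regularly varying of index $\kappa$ with explicit constant $C_0$. The classical convergence theorem of Duquesne for the height process of such a forest then yields
$$\Bigl(\tfrac{a(n)}{n}\, H^1_F([ns])\Bigr)_{s\geq 0}\;\Longrightarrow\;\Bigl(\tfrac{1}{(C_0|\Gamma(1-\kappa)|)^{1/\kappa}}\, H_s\Bigr)_{s\geq 0},$$
in Skorokhod topology, where $H$ is the height process of the spectrally positive $\kappa$-stable L\'evy process $Y$. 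This handles the second marginal.

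For the first marginal I would decompose $H^l_F(n)=\sum_{\rho\leq v\leq u(n)}l(v)$ as a sum over ancestors, split according to the successive type-$1$ vertices on the ancestral line. Since only type-$1$ vertices produce offspring, each type-$1$ ancestor contributes a ``segment'' whose expected total edge length is $\mu$, by the very definition of $\mu$ together with $E[\nu^1]=1$. Combined with a law of large numbers showing that the proportion of type-$1$ vertices among the first $[ns]$ vertices of $F$ in lexicographic order converges to $1/m$ --- for which the $(1+\epsilon)$-moment of $\nu$ in $(\mathbf H_l)(ii)$ is sufficient --- this yields $H^l_F([ns])\approx \mu\, H^1_F([ns]/m)$. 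Rescaling by $a(n)/n$ and invoking the already-established convergence of $H^1_F$ then produces the stated limit $\mu\, H_{s/m}/(C_0|\Gamma(1-\kappa)|)^{1/\kappa}$ and explains the appearance of the time-change $s\mapsto s/m$.

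The main obstacle, exactly as in \cite{de2022scaling}, is to make the above approximations uniform in $s$ on compacts and joint in the two coordinates, rather than merely marginal or finite-dimensional. This is where the exponential moment hypothesis $(\mathbf H_l)(iv)$ plays its essential role: it gives a maximal inequality forbidding any single edge length from creating a macroscopic bump along a prefix of the lexicographic traversal, which provides the tightness of $H^l_F$ in Skorokhod topology once that of $H^1_F$ is known. Once this uniform control is in place, the continuous mapping theorem together with the coupling of $F$ with $F^1$ delivers the claimed joint limit. Because each of these ingredients invokes only the four items of $(\mathbf H_l)$ together with the constant $C_0$ supplied by Proposition \ref{rvofL1}, the argument of \cite[Theorem 2]{de2022scaling} transfers verbatim, as asserted in the excerpt.
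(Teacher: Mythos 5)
Your sketch follows the same route as the paper, which simply invokes \cite[Theorem~2]{de2022scaling} verbatim once $(\mathbf H_l)$ is checked and gives no further details. The decomposition you outline --- Duquesne--Le~Gall convergence for the critical, $\kappa$-regularly-varying subforest $F^1$, a law of large numbers for the ratio $m$ between the lexicographic clocks of $F$ and $F^1$, the per-segment average $\mu$, and $(\mathbf H_l)(iv)$-based tightness for $H^l_F$ --- is precisely the structure of that reference's proof.
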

Then Proposition \ref{Prop_mian} can be proved by the deduction in \cite[ Section 2.3]{de2022scaling} based on Theorem \ref{thm2}. In summary, Theorem \ref{Thm_mian} follows if we can check hypothesis $(\mathbf H_l)$ is satisfied by $\mathbf{F}^R$ and $\mathbf{F}^X$, which will be done in Section \ref{Sec_4}. Before it, we prepare some preliminaries about the law of $(\beta(u))_{u\in \mathbf{F}}$ in the next section.

\section{Change of measure}\label{sec_com}
For $u\in \mathbf{F}$, we can easily compute that under $P^{\mathbb{T}}$ conditioned on $\beta(u)=k$, the process $(\beta(v))_{v\in c(u)}$ has the negative multinomial distribution $\boldsymbol{\beta}_k=\boldsymbol{\beta}_k(u)$, that is
\begin{align*}
    &P^{\mathbb{T}}\left((\beta(v))_{v\in c(u)}=(l_v)_{v\in c(u)}|\beta(u)=k\right)\\
    =&\left(\dfrac{m}{m+\nu(u)}\right)^k\left(\dfrac{1}{m+\nu(u)}\right)^{\sum\limits_{v\in c(u)} l_v} \dfrac{\left(k-1+\sum_{v\in c(u)l_v}\right)!}{(k-1)!\prod_{v\in c(u)}l_v!}.
\end{align*}

The formula can be understood in this way. When the random walk $X$ arrived at $u$, a coin is tossed with successful probability ${\nu(u)}/({m+\nu(u)})$. If succeed, $X$ moves to one of the children of $u$ with the same probability ${1}/({m+\nu(u)})$. Since $X$ is recurrent, it must come back to $u$ again and continue to choose the next vertex. We toss the coin repeatedly until the first failure and let $X$ move back to $u^*$, which completes the first trial. We repeat the procedure until the $k$-th failure. 

For the multi-type Galton-Watson tree $(\TT,\beta)$, we denote by $P_i$ its law with $\beta(\rho)=i$. It is calculated in \cite{aidekon2017scaling} from the distribution of $\beta$ that
\begin{align*}
    m_{ij}:=E_i[\sum\limits_{|u|=1}1_{\{\beta(u)=j \}}]=\dfrac{(i+j-1)!}{(i-1)!j!}\dfrac{m^{i+1}}{(m+1)^{i+j}}.
\end{align*}
The left and right eigenvectors of the matrix $(m_{ij})_{ij}$ associated with the eigenvalue $1$ are the $(a_i)_{i\geq 1}$ and $(b_i)_{i\geq 1}$ respectively, where $a_i:=(m-1)m^{-i}$ and $b_i:=(1-m^{-1})i$. They are normalized such that $\sum_{i\geq 1}a_i=1$ and $\sum_{i\geq 1} a_ib_i=1$.

Let $Z_n:=\sum_{|u|=n}\beta(u)$ to be the multi-type additive martingale of $(\mathbf{T},\beta)$. For every $i\geq 1$, $(Z_n/i)_{n\geq 0}$ becomes a martingale under $P_i$ with expectation $1$ with respect to the filtration generated by $(u,\beta(u))_{u\in \mathbf{T}, |u|\leq n}$ because $(b_i)_i$ is the right eigenvector of $(m_{ij})_{ij}$ associated to eigenvalue 1.

A new law $\hat{P}_i$ is introduced on marked trees with a spine using the multi-type additive martingale $Z_n/i$. Let $\hat{\boldsymbol{\beta}}_k=\hat{\boldsymbol{\beta}}_k(u)=(\hat{\beta}_v)_{v\in c(u)}$ be a probability measure with Radon-Nikodym derivative $Z_1/k$ with respect to $\boldsymbol{\beta}_k$. We construct $(\TT,\beta,(\omega_n)_{n\geq 0})$ under $\hat{P}_i$ in the following way.

\begin{enumerate}
    \item \textbf{Initialisation}

    Generation $0$ of T is made up of the root $\rho$ with type i. Also set $\omega_0=\rho$.

    \item \textbf{Induction}

Let $n\geq 0$. Suppose the tree and $(\omega_k,\beta(\omega_k))_{0\le k\le n}$ have been built. We let $\omega_n$ reproduce independently with the law $\hat{\boldsymbol{\beta}}_{\beta(\omega_n)}$. Other vertices $u$ of generation $n$ reproduce according to $\boldsymbol{\beta}_{\beta(u)}$ independently. Then choose a vertex as $\omega_{n+1}$ among children $u$ of $\omega_n$ each with probability $\beta(u)/(\sum_{v\in c(\omega_n)}\beta(v) )$.  
\end{enumerate}

We denote by $\hat{E}_i$ the expectation associated with $\hat{P}_i$. The new measure functions as the spinal decomposition of branching random walk. We call $(\omega_n)_{n\ge 0}$ vertices on the spine whose law is characterized by a Markov chain. 

\begin{proposition}\cite[Proposition 3]{de2022scaling}\label{Prop_ChangeofMeas1} The following statements holds. Under $\hat{P}_i$, the process $(\beta(\omega_k))_{k\geq 0}$ is a Markov chain on $\mathbb{N}$ with initial state $i$, and with transition probabilities $(\hat{p}_{ij})_{i,j\geq 1}$ where
    \begin{align*}
        \hat{p}_{ij}:=\frac{b_j}{b_i}m_{ij}.
    \end{align*}
\end{proposition}

Define the $\sigma$-field $\mathcal{F}_n:=\sigma((u,\beta(u))_{u\in\mathbb{T},|u|\leq n})$, we also have the multi-type many-to-one formula:
\begin{lemma} \cite[Lemma 7]{de2022scaling}
    For all $n\in \mathbb{N}^*$, let $g: \mathbb{N}^n\rightarrow \mathbb{R}$ be a positive measurable function and $\Xi_n$ a positive $\mathcal{F}_n$-measurable random variable, then 
    \begin{align*}
        E_i\left[\sum\limits_{|u|=n}\beta(u)g(\beta(u_1),\beta(u_2),\dots,\beta(u_n))\Xi_n\right]=i\hat{E}_i \left[g(\beta(\omega_1),\beta(\omega_2),\dots,\beta(\omega_n))\Xi_n\right].
    \end{align*}
\end{lemma}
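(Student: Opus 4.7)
The approach will be the standard spine / size-biasing argument: I will compute the joint law of the decorated tree $(\TT,\beta)$ truncated at generation $n$ together with the full spine ancestry $(\omega_0,\dots,\omega_n)$ under $\hat P_i$, identify it as $\beta(\omega_n)/i$ times the corresponding $P_i$ law on the bare tree, and then obtain the formula by summing over the spine.

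Concretely, I would fix a marked finite tree $\tau$ truncated at generation $n$ together with a distinguished ray $\rho=w_0,w_1,\dots,w_n$ and expand $\hat P_i(\TT_{\leq n}=\tau,\,\omega_k=w_k,\,0\le k\le n)$ via the inductive construction preceding Proposition~\ref{ChangeofMeas1}. At each step $k$ the spine vertex $w_{k-1}$, of type $j:=\beta(w_{k-1})$, reproduces under the tilted law $\hat{\boldsymbol\beta}_j$ whose density with respect to $\boldsymbol\beta_j$ is $Z_1/j$ (this is a probability because $E_j[Z_1]=\sum_\ell\ell\,m_{j\ell}=j$, which is just the eigenvector identity $Mb=b$ with $b_i=(1-m^{-1})i$), and $w_k$ is then selected among the children of $w_{k-1}$ with probability $\beta(w_k)/Z_1(w_{k-1})$; the other generation-$(k-1)$ vertices reproduce under the unbiased $\boldsymbol\beta$.

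The decisive step is the telescoping cancellation: the joint contribution of the tilted reproduction at $w_{k-1}$ and the subsequent spine choice of $w_k$ equals
\[
\frac{Z_1(w_{k-1})}{\beta(w_{k-1})}\cdot\frac{\beta(w_k)}{Z_1(w_{k-1})}=\frac{\beta(w_k)}{\beta(w_{k-1})}
\]
times the usual unbiased offspring probability; the $Z_1(w_{k-1})$'s cancel inside each factor, and taking a product over $k=1,\dots,n$ the remaining ratios telescope to $\beta(w_n)/\beta(w_0)=\beta(w_n)/i$, yielding
\[
\hat P_i\bigl(\TT_{\leq n}=\tau,\,\omega_0=w_0,\dots,\omega_n=w_n\bigr)=\frac{\beta(w_n)}{i}\,P_i\bigl(\TT_{\leq n}=\tau\bigr).
\]
Since the ancestral ray of any $u$ with $|u|=n$ in $\tau$ is uniquely determined by $u$, multiplying by $g(\beta(u_1),\dots,\beta(u_n))X_n$ and summing over such $u$ and over $\tau$ immediately produces the many-to-one identity.

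I do not anticipate any serious obstacle; the only genuine computation is the telescoping cancellation above, and the passage from bounded $g,X_n$ to positive measurable ones is routine monotone convergence. The one subtlety worth flagging is getting the correct normalisation of $\hat{\boldsymbol\beta}_j$ (density $Z_1/j$, not $Z_1$) so that it is really a probability measure; once this is in place the rest is bookkeeping. Notably the lemma uses no regular-variation hypothesis on $\nu$, which is why the argument applies verbatim as in \cite{de2022scaling}.
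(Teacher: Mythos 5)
Your proof is correct and follows the standard spine/size-biasing route, which is precisely the argument used in the cited reference: tilt the offspring law along the spine, observe that the per-step factor $\frac{Z_1(w_{k-1})}{\beta(w_{k-1})}\cdot\frac{\beta(w_k)}{Z_1(w_{k-1})}=\frac{\beta(w_k)}{\beta(w_{k-1})}$ telescopes to $\beta(w_n)/i$, and then sum over rays. You also rightly flag that the Radon--Nikodym density of $\hat{\boldsymbol\beta}_j$ must be $Z_1/j$ (not $Z_1$, as the paper's phrasing loosely suggests) for it to be a probability measure.
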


The new measure $\hat{P}_i$ is a little hard to apply to our case since we choose a vertex on the spine according to its type. However, as is also introduced in \cite{de2022scaling}, we can apply another equivalent construction based on the classic spinal decomposition. 

Define $\hat{\nu}$ a random variable with Radon-Nikodym derivative $\nu/m$ with respect to the reproduction distribution $\nu$, that is for any bounded measurable function $f:\bigcup_{n\in \mathbb{N}\cup\mathbb{\{N\}} }\mathbb{R}^n\rightarrow \mathbb{R}$, we have $\mathbb{E}[f(\hat{\nu})]=\mathbb{E}[(\nu/m)f(\nu)]$. We first introduce a law $\hat{\mathbb{P}}$ on $\mathbb{T}$ which is often referred to as the spinal decomposition. 
\begin{enumerate}
    \item \textbf{Initialisation}
    
    Generation 0 is made of the root $\rho$. Also set $\tilde{\omega}_0=\rho$.

    \item \textbf{Induction}

    Let $n\geq 0$. Suppose the tree and $\tilde{\omega}_k$ up till $n$ has been built. The vertex $\omega_n$ reproduces according to $\hat{\nu}$ independently. Other vertices $u$ at generation $n$ reproduces according to $\nu$ independently. Then choose a vertex on average from the children of $\tilde{\omega}_n$ as $\tilde\omega_{n+1}$.
    
\end{enumerate}

Now we run a series of random walks on $\mathbb{T}$ under $\hat{\mathbb{P}}$. For any vertex  $\tilde{\omega}_k,k\ge 0$ on the spine, we associate it with two i.i.d truncated nearest-neighbour random walk, denoted as $(X^{1,\tilde{\omega}_k}_n)_{n\geq 0}$ and $(X^{2,\tilde{\omega}_k}_n)_{n\geq 0}$ respectively. Each of them is defined as follows. It starts on $\tilde{\omega}_k$. If it is on $u\in \mathbb{T}_{\tilde\omega_k }$, then it jump to one of the child with probability ${1}/({m+\nu(u)})$ and to the parent of $u$ with probability ${m}/({m+\nu(u)})$. If it reach $\tilde{\omega}_{k-1}$, then it is killed at once.

Let for each $u\in \mathbb{T}$, 
\begin{align*}
    \tilde{\beta}_k^j(u):=\sharp\{n\geq 0: X^{j,\tilde{\omega}_k}_n=u^*, X^{j,\tilde{\omega}_k}_{n+1}=u \},\,k\geq 0, j=1,2
\end{align*}
be the edge local times on $u$ of the walk launched on $\tilde{\omega}_k$, and let
\begin{align*}
    \tilde{\beta}^j(u):=\sum\limits_{k=1}^\infty \tilde{\beta}_k^j(u),\, j=1,2.
\end{align*}
Finally let
\begin{align*}
     \tilde{\beta}(u):= \tilde{\beta}^1(u)+ \tilde{\beta}^2(u)+1_{\{u\in(\tilde{\omega}_k)_{k\geq 0}  \}}.
\end{align*}
 We set $\tilde{T}:=\{u\in \mathbb{T}, \tilde{\beta}(u)\geq 0\}$. It is proved in \cite{de2022scaling} that the two changes of measure are indeed the same. 
\begin{proposition}\cite[Proposition 5]{de2022scaling}\label{eq-change}
    Under $\hat{P}_1$, $(T,\beta,(\omega_k)_{k\geq 0})$ has the same law as $(\tilde{T},\tilde{\beta},(\tilde{\omega}_k)_{k\geq 0})$ with $(\tilde{T},(\tilde{\omega}_k)_{k\geq 0})$ built under $\hat{\mathbb{P}}$.
\end{proposition}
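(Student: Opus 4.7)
The plan is to identify the joint law of $(T,\beta,(\omega_k)_{k\ge 0})$ under the two constructions and verify they agree, by induction on generations. At step $n$ I would separately check (i) that the distribution of the tree structure together with the spine, up to generation $n+1$, agrees under both measures, and (ii) that the conditional law of $\beta$ on the newly created vertices also agrees.

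For (i), the key computation is under $\hat{P}_1$: given $\beta(\omega_n)=i$, the reproduction $(\beta(v))_{v\in c(\omega_n)}$ is drawn from $\hat{\boldsymbol{\beta}}_i$, the $Z_1$-biased version of $\boldsymbol{\beta}_i$ with $Z_1=\sum_v\beta(v)$, and $\omega_{n+1}$ is then picked with probability $\beta(v)/Z_1$. Using the explicit negative multinomial from Section 3.1, conditionally on $\nu(\omega_n)=k$ and $\beta(\omega_n)=i$, the random variable $Z_1$ is negative binomial of mean $ik/m$, so
\[
\hat{P}_1\bigl(\nu(\omega_n)=k\mid\beta(\omega_n)=i\bigr)=\frac{E[Z_1\mid\nu=k,\beta(\omega_n)=i]}{i}\,P(\nu=k)=\frac{k}{m}P(\nu=k)=P(\hat\nu=k),
\]
matching the spine reproduction under $\hat{\mathbb{P}}$. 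Since $\boldsymbol{\beta}_i$ is exchangeable in its $k$ entries, picking $\omega_{n+1}$ proportionally to $\beta(v)$ after size-biasing by $Z_1$ produces a uniform child of $\omega_n$, again as prescribed by $\hat{\mathbb{P}}$.

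For (ii), I would use the random-walk representation of the negative multinomial to reconstruct $\beta$ as local times of nearest-neighbor walks on the subtree. The $\hat{P}_1$-conditional law of $(\beta(v))_{v\in c(\omega_n)}$, given the tree and the spine, admits an excursion decomposition at each spine vertex: starting from $\tilde\omega_i$, the visits of a recurrent biased walk split into excursions which cross into the spine subtree below and those which do not, with the two families being conditionally independent by the strong Markov property. Summed across spine generations, these excursion families reproduce exactly the total local times $\tilde\beta = \tilde\beta^1+\tilde\beta^2+1_{\{\cdot\in\text{spine}\}}$ of the two independent truncated walks attached to each spine vertex in the construction of $\hat{\mathbb{P}}$.

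The main obstacle lies in part (ii): making the excursion decomposition precise and explicitly identifying the pair of resulting walks at each spine level with $X^{1,\tilde{\omega}_i}$ and $X^{2,\tilde{\omega}_i}$. The computation itself rests on the memoryless property of the biased walk and elementary negative binomial algebra, but the multi-generation bookkeeping needed to match the joint law exactly is the delicate point. Once this identification is in place, the induction step closes and Proposition \ref{eq-change} follows.
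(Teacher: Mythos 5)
The paper does not give its own proof of this proposition; it is imported verbatim from \cite{de2022scaling} (Proposition 5 there), so there is no in-text argument to compare your attempt against. Evaluating your outline on its merits: part (i) is correct and essentially complete. The computation $\hat{P}_1(\nu(\omega_n)=k\mid\beta(\omega_n)=i)=\frac{1}{i}E_i[Z_1\mid\nu=k]P(\nu=k)=\frac{k}{m}P(\nu=k)$ uses exactly that $Z_1$ is negative binomial with mean $ik/m$ under $\boldsymbol\beta_i$ given $\nu=k$, and the exchangeability-gives-uniformity step for the choice of $\omega_{n+1}$ is a standard size-biasing fact. This correctly matches the spine reproduction $\hat\nu$ and the uniform spine selection of $\hat{\mathbb{P}}$.

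Part (ii) is where your proof stops short, and you say so yourself; this is a genuine gap, not a cosmetic one. The content of the proposition is precisely the identification you leave open: one must show that under $\hat P_i$, conditionally on $\beta(\omega_{k-1})=i$, $\beta(\omega_k)=j$ and $\nu(\omega_{k-1})=N+1$, the off-spine children $(\beta(v))_{v\in c(\omega_{k-1})\setminus\{\omega_k\}}$ follow the same conditioned negative multinomial (with $i+j$ ``generalized failures'' and failure probability $\frac{m+1}{m+N+1}$, as the paper records in Section 4.2 \emph{after} invoking the proposition) as the edge local times produced by the $\hat{\mathbb{P}}$ construction. On the $\hat{\mathbb{P}}$ side the bookkeeping is concrete: $i-1$ incoming excursions crossing $(\tilde\omega_{k-2},\tilde\omega_{k-1})$ from walks launched higher up, plus the two walks $X^{1,\tilde\omega_{k-1}}$ and $X^{2,\tilde\omega_{k-1}}$ launched at $\tilde\omega_{k-1}$, give $i+1$ trial sequences at $\tilde\omega_{k-1}$; conditioning on $\beta(\omega_k)=j$ merges the $j-1$ steps into $\omega_k$ with the $i+1$ stops, giving the $i+j$ generalized failures. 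Your phrasing (``excursions which cross into the spine subtree below and those which do not'') conflates the walks killed at the \emph{parent} $\tilde\omega_{k-2}$ with excursions into the spine child, and does not explain where the factor $+1$ (two launched walks minus one crossing absorbed into the spine count) comes from. On the $\hat P_1$ side one must compute the conditional law of $\hat{\boldsymbol\beta}_i$ given that the size-biased choice lands on a child of type $j$ and show it equals this conditioned negative multinomial. Neither computation is carried out, so the induction step cannot close as written. The outline is sound and the missing piece is elementary negative-binomial algebra, but it is the substance of the proposition, not a routine detail to be waved through.
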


\section{Proof of Proposition 2}\label{Sec_4}
The key of the proof is to ensure that $\mathbf{F}^R$ and $\mathbf{F}^X$ satisfy $(\mathbf H_l)$. By construction, for both $\mathbf{F}^R$ and $\mathbf{F}^X$, $\nu^1(u)$ has the law of $L^1$, the cardinality of the optional line $\mathcal{L}^1$, under $P_1$. The law of the total offspring distribution of $\mathbf{F}^R$ has the law of $B^1$, the cardinal of $\mathcal{B}^1$. The law of the total offspring distribution of $\mathbf{F}^X$ has the law of $\sum_{u\in \mathcal{B}^1 }2\beta(u)$, the total time spent in $\mathcal{B}^1$ in one excursion. 

Therefore, it's enough to verify that $E_1[L^1]=1$, which proves $(\mathbf H_l)$(i) of $\mathbf{F}^R$ and $\mathbf{F}^X$; and verify that $E_1[(\sum_{u\in \mathcal{B}^1}\beta(u))^{1+\epsilon}]<\infty$, which proves $(\mathbf H_l)$(ii) of $\mathbf{F}^R$ and $\mathbf{F}^X$. For $(\mathbf H_l)$(iv), notice that each vertex in first generation $\mathbf{F}^R$, denoted by $v^R$, corresponds to a vertex $v\in \mathcal{B}^1$ and that $l(v^R)=|v|$. Moreover, a vertex of the first generation of $\mathbf{F}^X$, denoted as $v^X$, corresponds to a vertex $v^R$ of $\mathbf{F}^R$ after having replicated itself a number of $2\beta(u)-1$ of times. Hence, 
\begin{align*}
    E\left[\sum\limits_{u\in \nu^R}r^{l(u)}\right ]=E_1\left [\sum\limits_{u\in \mathcal{B}^1}r^{|u|}\right ]\leq E_1\left [\sum\limits_{u\in \mathcal{B}^1}2\beta(u)r^{|u|}\right ]=E\left [\sum\limits_{u\in \nu^X}r^{l(u)}\right ].
\end{align*}
Therefore, we only need to verify that there exists an $r>1$ such that $E_1\left [\sum_{u\in \mathcal{B}^1}2\beta(u)r^{|u|}\right ]<\infty$. 

Finally, for $(\mathbf H_l)$(iii), we need to check that there exists a positive constant $C_0$ such that $P_1(L^1> x)\sim C_0x^{-\kappa}$. This is the core of the article as well as the main contribution. The proof is quite different from that in \cite{de2022scaling}, since in our case the regular varying tail of $L^1$ comes from the tail behavior of $\nu$, instead of that of $\mathcal{L}^1$. 

\subsection{Hypotheses ($\mathbf H_l$)(i),(ii) and (iv)}
We define 
\begin{align*}
    \hat{\tau}_1:=\min\{k\geq 1: \beta(\omega_k)=1 \}
\end{align*}
as the first non-null hitting time of $1$ by the Markov chain $(\beta(\omega_k))_{k\geq 0}$. Hypotheses ($\mathbf H_l$)(i),(ii) and (iv) then follows from Lemma \ref{Lm_exp_L1}, \ref{Lm_sumbeta} and \ref{Lm_moment_B1}.

\begin{lemma}\label{Lm_exp_L1}
    For any $i\geq 1$, we have,
    \begin{align*}
        E_i[L^1]=i.
    \end{align*}
    In particular, $E_1[L^1]=1$, and the reproduction law of both $\mathbf{F}^R$ and $\mathbf{F}^X$ satisfies conditions $(\mathbf H_l)$(i).
\end{lemma}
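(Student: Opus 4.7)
The plan is to express $L^1$ in terms of a hitting time of the spinal Markov chain, via the many-to-one formula. Since $\beta(u)=1$ for every $u\in\mathcal{L}^1$ by the very definition of the optional line, one has the identity
\[
L^1=\sum_{u\in\mathcal{L}^1}\beta(u)=\sum_{n\geq 1}\sum_{|u|=n}\beta(u)\,1_{\{u\in\mathcal{L}^1\}}.
\]
For $|u|=n$, the event $\{u\in\mathcal{L}^1\}$ depends only on the ancestor types: it is exactly $\{\beta(u_n)=1,\,\beta(u_k)\neq 1\text{ for }k=1,\dots,n-1\}$. Applying the multitype many-to-one formula (\cite[Lemma 7]{de2022scaling}, just recalled above) with $g(x_1,\dots,x_n):=1_{\{x_n=1,\,x_k\neq 1\,\forall k<n\}}$ and $X_n\equiv 1$ gives
\[
E_i\!\left[\sum_{|u|=n}\beta(u)\,1_{\{u\in\mathcal{L}^1\}}\right]=i\,\hat{P}_i(\hat\tau_1=n),
\]
so that summing over $n\geq 1$ produces $E_i[L^1]=i\,\hat{P}_i(\hat\tau_1<\infty)$.

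It then remains to check that $\hat{P}_i(\hat\tau_1<\infty)=1$ for every $i\geq 1$. By Proposition~\ref{ChangeofMeas1} the process $(\beta(\omega_k))_{k\geq 0}$ is an irreducible Markov chain on $\mathbb{N}^*$. The vector $\pi_j:=a_jb_j$ is invariant for its transition kernel, as follows immediately from the left-eigenvector relation $\sum_i a_im_{ij}=a_j$ combined with the explicit form of $\hat p_{ij}$; the normalization $\sum_j a_jb_j=1$ recalled in Section~3.1 then turns $\pi$ into a bona fide probability measure. Hence the chain is positive recurrent and reaches state $1$ almost surely from any initial state, so $\hat{P}_i(\hat\tau_1<\infty)=1$ as required.

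Combining the two steps, $E_i[L^1]=i$ for every $i\geq 1$, and in particular $E_1[L^1]=1$. Since the law of $\nu^1$ for both $F^R$ and $F^X$ coincides with the law of $L^1$ under $P_1$ (as noted at the start of Section~4), this yields $E[\nu^1]=1$ and thus verifies $(\mathbf H_l)(i)$ for both forests. The main technical point is the many-to-one identity, which is a direct application of the quoted lemma; the remaining step is then standard Markov chain theory once the stationary distribution has been identified. A slicker formulation would apply the optional stopping theorem to the nonnegative martingale $(Z_n/i)$ along the optional line $\mathcal{L}^1$, but this is essentially the same argument, as one must still discard an at-infinity contribution, which is precisely what positive recurrence of the spine chain provides.
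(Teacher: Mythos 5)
Your argument follows the paper's own route: write $L^1=\sum_{u\in\mathcal{L}^1}\beta(u)$ (using $\beta\equiv 1$ on $\mathcal{L}^1$), decompose by generation, apply the multitype many-to-one formula to each term, and sum to obtain $E_i[L^1]=i\,\hat P_i(\hat\tau_1<\infty)$. The paper then passes to $=i$ without comment; you make the needed fact $\hat P_i(\hat\tau_1<\infty)=1$ explicit via positive recurrence of the spine chain, which is a genuine tightening of the written argument rather than a different proof.

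One remark on your invariance check. The identity $\sum_i a_ib_i\,\hat p_{ij}=b_j\sum_i a_im_{ij}=a_jb_j$ requires $\hat p_{ij}=(b_j/b_i)m_{ij}$. That is indeed the kernel forced by the many-to-one formula (take $n=1$, $g=1_{\{j\}}$, $X_1\equiv1$ to get $j\,m_{ij}=i\,\hat p_{ij}$), but Proposition~\ref{ChangeofMeas1} as printed states the inverted ratio $\hat p_{ij}=(b_i/b_j)m_{ij}$, and the stationary measure $(m^{-i}/i)_{i\ge1}$ invoked later in the paper is the one matching that inverted form, not $a_ib_i\propto i\,m^{-i}$. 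Since both $\sum_i a_ib_i$ and $\sum_i m^{-i}/i$ are finite, the chain is positive recurrent under either reading, so your conclusion is unaffected; but you should be aware that your ``immediate'' invariance computation tacitly corrects the kernel as printed, and it is worth flagging which form of $\hat p_{ij}$ you are actually using.
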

\begin{proof}
    The proof is the same as that of \cite[Lemma 9]{de2022scaling}. For readers' convenience, we spell out the details. For any $i\geq 1$, we have
    \begin{align*}
        E_i\left[L^1\right]&=E_i\left[\sum\limits_{u\in \mathcal{L}^1}1\right]=E_i\left[\sum\limits_{u\in \mathcal{L}^1}\beta(u)\right]\\
        &=\sum\limits_{k\geq 1}E_i\left[\sum\limits_{|u|=k}\beta(u)1_{\{\beta(u_1),\dots ,\beta(u_{k-1})\neq 1,\beta(u)=1\}}\right]\\
        &=\sum\limits_{k\geq 1} i\hat{E}_i\left[1_{\{\beta(\omega_1),\dots,\beta(\omega_{k-1})\neq 1,\beta(\omega)=1\}}\right]\\
        &=i\sum\limits_{k\geq 1} \hat{E}_i\left[1_{\{\hat{\tau}_1=k\}}\right]=i.
    \end{align*}
     where we used the multi-type many-to-one lemma in the last but one equation.
\end{proof}

\begin{lemma}\label{Lm_sumbeta}
	For every $\alpha>0$, there exist $C_\alpha>0$ such that for any $r\in (1,m^\alpha)$ and $i\geq 1$, 
	\begin{equation*}
		\hat{E}_i\left[\sum_{k=1}^{\hat{\tau}_1}(\beta(\omega_k))^\alpha r^k\right]\leq C_\alpha i^\alpha.
	\end{equation*}
	As a consequence, first there exists a constant $C_p>0$, such that for any $p>0$
	\begin{equation*}
		\hat{E}_i\left[\hat{\tau}_1^p\right]\leq C_p\ln^p(1+i)
	\end{equation*}
and second the laws of $\mathbf{F}^R$ and $\mathbf{F}^X$ satisfy $(\mathbf H_l)$(iv).
\end{lemma}
\begin{proof}
	The proof is similar to \cite[Lemma 10]{de2022scaling}. Note that here we can take $\alpha$ as any positive real number, since in that proof
 \begin{align*}
     E\left[\sum_{|u|=1} e^{-(\alpha+1)V(u)}\right]=E\left[\sum_{|u|=1} m^{-(\alpha+1)}\right]=m^{-\alpha}<1
 \end{align*}
for any $\alpha>0$.
\end{proof}

\begin{lemma}\cite[Lemma 11]{de2022scaling}\label{Lm_moment_B1}
 We set $\tilde{B}^1:=\sum_{u\in \mathcal{B}^1}\beta(u)$. For every $\alpha\in (0,\kappa-1)$, $\epsilon>0$, there exists a constant $C^\prime_{\alpha+\epsilon} >0$ such that for any $i\geq 1$,
    \begin{align*}
        E_i[(\tilde{B}^1)^{1+\alpha}]\leq C^\prime_{\alpha+\epsilon} i^{1+\alpha+\epsilon}.
    \end{align*}
    As a consequence, the law of $\mathbf{F}^R$ and $\mathbf{F}^X$ satisfy hypothesis $(\mathbf H_l)$(ii).
\end{lemma}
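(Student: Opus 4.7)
The plan is to reduce the moment to a spine expectation via a size-biased identity, to decompose $\tilde{B}^1$ along the spine using Proposition~\ref{eq-change}, and then to combine a Jensen-type bootstrap from the first moment with Lemma~\ref{sumbeta}. The concrete target identity is
$$E_i\bigl[(\tilde{B}^1)^{1+\alpha}\bigr] \;=\; i\, \hat{E}_i\bigl[\hat{\tau}_1 \cdot (\tilde{B}^1)^\alpha\bigr],$$
obtained by writing $(\tilde{B}^1)^{1+\alpha}=\sum_{u\in\mathcal{B}^1}\beta(u)(\tilde{B}^1)^\alpha$, decomposing over generations, and applying the multitype many-to-one lemma at each generation. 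That $(\tilde{B}^1)^\alpha$ is not $\mathcal{F}_k$-measurable is handled by first conditioning on $\mathcal{F}_k$: past generation $k$ the subtrees are Galton--Watson with the same law under $P_i$ and $\hat{P}_i$ given the types, so the two conditional expectations coincide on such functionals.

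Next, under $\hat{P}_i$, Proposition~\ref{eq-change} yields the spinal decomposition
$$\tilde{B}^1 \;=\; \sum_{k=1}^{\hat{\tau}_1}\beta(\omega_k) \;+\; \sum_{j=0}^{\hat{\tau}_1-1}\sum_{v\in c(\omega_j)\setminus\{\omega_{j+1}\}} \tilde{B}^{1,v},$$
where, conditionally on the spine and on the off-spine reproduction variables, each $\tilde{B}^{1,v}$ is an independent copy of $\tilde{B}^1$ under $P_{\beta(v)}$. Since $\alpha\in(0,\kappa-1)\subset(0,1)$, subadditivity gives $(\tilde{B}^1)^\alpha \leq S^\alpha + \sum_{j,v}(\tilde{B}^{1,v})^\alpha$ with $S:=\sum_{k=1}^{\hat{\tau}_1}\beta(\omega_k)$. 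The bootstrap input is the first-moment bound $E_k[\tilde{B}^1]=k\,\hat{E}_k[\hat{\tau}_1]\leq Ck\ln(1+k)$, which is the identity above at $\alpha=0$ combined with Lemma~\ref{sumbeta}; Jensen then gives $E_k[(\tilde{B}^1)^\alpha]\leq C_\epsilon k^{\alpha+\epsilon}$ for any $\epsilon>0$.

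Plugging this bound into $\hat{E}_i[\hat{\tau}_1(\tilde{B}^1)^\alpha]$ and conditioning on the spine dominates the off-spine contribution by $C_\epsilon\,\hat{E}_i\bigl[\hat{\tau}_1\sum_{j,v}\beta(v)^{\alpha+\epsilon}\bigr]$. Conditioning further on $\beta(\omega_j)$ and $\nu(\omega_j)$, and exploiting the negative-multinomial law of the $(\beta(v))$'s together with the finiteness of $E[\nu^{\alpha+\epsilon}]$ (available because $\alpha+\epsilon<\kappa$ for $\epsilon$ small), one bounds the inner sum by a constant multiple of $\beta(\omega_j)^{\alpha+\epsilon}$. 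It then remains to estimate $\hat{E}_i\bigl[\hat{\tau}_1\sum_{k=1}^{\hat{\tau}_1}\beta(\omega_k)^{\alpha+\epsilon}\bigr]$ (which also dominates the spine piece $\hat{E}_i[\hat{\tau}_1 S^\alpha]$ via $S^\alpha\leq\sum_k\beta(\omega_k)^\alpha\leq\sum_k\beta(\omega_k)^{\alpha+\epsilon}$, since $\beta(\omega_k)\geq 1$), and this I would handle by Cauchy--Schwarz combined with Lemma~\ref{sumbeta} applied with some $r\in(1,m^{\alpha+\epsilon})$; the net bound is of order $i^{\alpha+\epsilon}$, yielding $E_i[(\tilde{B}^1)^{1+\alpha}]\leq C i^{1+\alpha+\epsilon}$. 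Hypothesis $(\mathbf H_l)(ii)$ then follows by taking $i=1$, since the total progeny of $F^R$ (resp.\ $F^X$) is $\tilde{B}^1$ (resp.\ dominated by $2\tilde{B}^1$) under $P_1$.

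The main obstacle is the off-spine estimate: the regular variation of $\nu$ produces heavy tails in $\sum_{v}\beta(v)^{\alpha+\epsilon}$, and the restriction $\alpha<\kappa-1$ appears precisely at the point where one requires a moment of $\nu$ of order $\alpha+\epsilon<\kappa$. The remaining bookkeeping, ensuring that constants do not depend on $i$ and that $\epsilon$ can be taken arbitrarily small, is a straightforward adaptation of the proof of Lemma~11 in \cite{de2022scaling}.
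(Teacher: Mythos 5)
Your plan has two genuine gaps. The claimed identity
$E_i[(\Tilde{B}^1)^{1+\alpha}]=i\,\hat{E}_i[\hat{\tau}_1(\Tilde{B}^1)^\alpha]$
does not hold: conditionally on $\mathcal{F}_k$ the subtrees past generation $k$ are \emph{not} the same under $P_i$ and $\hat{P}_i$, because the subtree rooted at the spine vertex $\omega_k$ continues to carry the $\hat{\boldsymbol{\beta}}$-biased reproduction along the spine. What the many-to-one lemma legitimately yields is
$E_i[(\Tilde{B}^1)^{1+\alpha}]=\sum_k i\,\hat{E}_i\bigl[1_{\{k\le\hat{\tau}_1\}}\,E_{P_i}[(\Tilde{B}^1)^\alpha\mid\mathcal{F}_k]\bigr]$,
and one cannot simply replace the $P_i$-conditional expectation by the raw $(\Tilde{B}^1)^\alpha$ computed on the $\hat{P}_i$-tree; one must keep the $\mathcal{F}_k$-measurable quantity and, e.g., bound it via Jensen by $(E_{P_i}[\Tilde{B}^1\mid\mathcal{F}_k])^\alpha$.

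The more serious problem is the off-spine estimate. You claim
$\hat{E}\bigl[\sum_{v\in c(\omega_j)\setminus\{\omega_{j+1}\}}\beta(v)^{\alpha+\epsilon}\mid\beta(\omega_j),\beta(\omega_{j+1})\bigr]\le C\beta(\omega_j)^{\alpha+\epsilon}$
using the ``finiteness of $E[\nu^{\alpha+\epsilon}]$'', but the left-hand side is in fact \emph{infinite}. Under $\Tilde{P}_\beta$ with $\beta=\beta(\omega_j)+\beta(\omega_{j+1})$, each sibling marginal $\beta(v)$ is negative binomial with parameters not depending on the number of siblings, so the expectation factorizes into $\Tilde{E}_\beta[\hat{\nu}(\omega_j)-1]$ times a $\beta$-dependent constant; and the spine vertex reproduces with $\hat{\nu}$ biased by $\nu/m$, so $\hat{\mathbb{E}}[\hat{\nu}]=\mathbb{E}[\nu^2]/m=\infty$ since $\kappa<2$. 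The culprit is applying subadditivity all the way down to the per-vertex level, $\sum_{j,v}(\Tilde{B}^{1,v})^\alpha$: this destroys the cancellation that keeps the siblings' contribution finite. Subadditivity should only be applied across generations, keeping $V_j^\alpha:=\bigl(\sum_{v\in c(\omega_j)\setminus\{\omega_{j+1}\}}\Tilde{B}^{1,v}\bigr)^\alpha$ intact; one then controls $\hat{E}[V_j^\alpha\mid\text{spine}]$ by Jensen conditionally on the $(\beta(v))$'s and by the regular variation (of index $\kappa-1$) of $V=\sum_v\beta(v)$ established in Lemma~\ref{Sumofbeta}, which has finite moments of every order strictly below $\kappa-1$. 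With these two corrections the remainder of your bookkeeping (Lemma~\ref{sumbeta} and Cauchy--Schwarz to absorb the $\hat{\tau}_1$ factor) should close the argument.
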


\subsection{Regular varying tail of $L^1$}
Recall that we have assumed that $\kappa\in(1,2)$  and the function $l(x)$ is slowly varying. Instead of proving the regular varying tail of $L^1$ under $P_1$, we prove it under $\hat{P}_1$ thanks to Lemma \ref{Lm_CM}.

\begin{lemma}\cite[Lemma 13]{de2022scaling}\label{Lm_CM}
As $x\rightarrow\infty$, $P_1(L^1>x)\sim x^{-\kappa}l(x)$ if and only if as $x\rightarrow \infty$, 
\begin{align*}
    \hat{P}_1(L^1>x)\sim\frac{\kappa}{\kappa-1}x^{-(\kappa-1)}l(x).
\end{align*}
\end{lemma}
We then claim the main proposition of the section. 

\begin{proposition}\label{Prop_rvofL1}
	We have 
	\begin{equation*}
	\hat{P}_1(L^1>x)\sim C_\kappa x^{-(\kappa-1)}l(x),
	\end{equation*} 
	where 
 \begin{align*}
      C_\kappa=\frac{2\Gamma(\kappa)\kappa m}{(m-1)^3(m+1)^{\kappa-2}(\kappa-1)}.
 \end{align*}
As a result, thanks to Lemma \ref{Lm_CM}, $(H_l)$(iii) holds with 
\begin{align*}
    C_0=\frac{\kappa-1}{\kappa}C_{\kappa}=\frac{2\Gamma(\kappa)m}{(m-1)^3(m+1)^{\kappa-2}}.
\end{align*}

 \end{proposition}
\begin{remark}
    \label{Remark_kappa_2}
    For $\kappa=2$, we can prove in instead that when $E_1[(L^1)^2]=\infty$,
    \begin{align*}
       \mu(x)=E_1\left[(L^1-1)^2\textbf{1}_{\{L^1\leq x\}}\right]\sim \hat{E}_1\left[L^1\textbf{1}_{\{L^1\leq x\}}\right].
    \end{align*}
    In fact, it holds that
\begin{equation*}
\begin{split}
        E_1\left[(L^1-1)^2\textbf{1}_{\{L^1\leq x\}}\right]&\le \sum_{k\geq1}E_1\left[\sum_{|u|=k}\textbf{1}_{\{u\in\mathcal{L}^1\}}E_1\left[L^1\textbf{1}_{\{L^1\leq x\}}|\mathcal{F}_k\right]\right]\\
    &\le \hat{E}_1\left[L^1\textbf{1}_{\{L^1\leq x\}}\right]
\end{split}
\end{equation*}
and 
\begin{align*}
          E_1\left[(L^1-1)^2\textbf{1}_{\{L^1\leq x\}}\right]&\ge \sum_{k\geq1}E_1\left[(\sum_{|u|=k}\textbf{1}_{\{u\in\mathcal{L}^1\}}-2) E_1\left[L^1\textbf{1}_{\{L^1\leq x\}}|\mathcal{F}_k\right]\right]\\
    &\ge \hat{E}_1\left[L^1\textbf{1}_{\{L^1\leq x\}}\right]-2.
\end{align*}
We can find an analog of Proposition \ref{Prop_rvofL1} when $\kappa=2$ that
\begin{align*}
    \hat{E}_1\left[L^1\textbf{1}_{\{L^1\leq x\}}\right]\sim 2C_0 \int_{0}^x y^{-1}l(y)\d y
\end{align*}
with a little modification.
\end{remark}

Recall that $(\beta(\omega_k))_{k\geq 0}$ is Markov chain with transition matrix $(\hat{p}_{ij})_{ij}$ under $\hat{P}_1$. In order to obtain the tail behavior of $L^1$, we need the second construction mentioned in Section \ref{sec_com}. Under the measure $\hat{\mathbb{P}}$, $\omega_{k}$ is chosen among the children of $\omega_{k-1}$ on average. Let $\Omega(\omega_{k})$ be the siblings of $\omega_k$. If we condition on $\beta(\omega_{k-1})$ and $\beta(\omega_{k})$, it is easy to see that the distribution of $(\beta(u))_{u\in \Omega(\omega_{k})}$ does not depend on anything else.

In detail, conditioned on $\beta(\omega_{k-1})$ and $\beta(\omega_{k})$, there are $\beta(\omega_{k-1})-1$ times of visit of the directed edge $(\omega_{k-2},\omega_{k-1})$ by the random walks $X_l^{1,\omega_{l}}$ and $X_l^{2,\omega_{l}}$ where $0 \leq l\leq k-2$. ($1$ is subtracted since $\omega_{k-1}$ is on the spine so an additional time is counted.) Moreover, there are two other random walks $X_n^{1,\omega_{k-1}}$ and $X_n^{2,\omega_{k-1}}$ which start from $\omega_{k-1}$ that killed when hitting $\omega_{k-2}$. In all, there are $\beta(\omega_{k-1})+1$ times that a random walk starts from $\omega_{k-1}$. Each time, we toss a coin with probability $m/(\nu(\omega_{k-1})+m)$ to succeed. If it succeeds, we choose one of the children of $\omega_{k-1}$ with the probability ${1}/({\nu(\omega_{k-1})+m})$ and we repeat the procedure until it fails for $\beta(\omega_{k-1})+1$ times. 

Since $\omega_k$ is chosen on average from the children of $\omega_{k-1}$, without loss of generality, we assume it is the first child of $\omega_{k-1} $. Therefore, the conditional distribution of $(\beta(u),u\in \Omega(\omega_k))$, denoted as $(\beta_2,\beta_3,\dots\beta_{\nu(\omega_{k-1})})$, given $\beta(\omega_{k-1}),\beta(\omega_k)$ is
\begin{align*}
&\hat{P}_1\left((\beta_2,\beta_3,\dots\beta_{\nu(\omega_{k-1})})=(b_2,b_3,\dots b_{\nu(\omega_{k-1})})\Bigg|\beta(\omega_{k-1}),\beta(\omega_{k})\right)\\
=&\dfrac{\left(\beta(\omega_{k-1})+\beta(\omega_{k})+\sum_{j=2}^{\nu(\omega_{k-1})}b_{j}-1\right)!}{\left(\beta(\omega_{k-1})\right)!\left(\beta(\omega_{k})-1\right)!\prod_{j=2}^{\nu(\omega_{k-1})} (b_j)!}\dfrac{m^{\beta(\omega_{k-1})+1}}{\left(\nu(\omega_{k-1})+m\right)^{\beta(\omega_{k-1})+\beta(\omega_{k})+\sum_{j=2}^{\nu(\omega_{k-1})}b_j}}\\
	 &\left(\dfrac{(\beta(\omega_{k-1})+\beta(\omega_{k})-1)!}{(\beta(\omega_{k-1}))!(\beta(\omega_{k})-1)! }\dfrac{(m)^{\beta(\omega_{k-1})+1}}{(1+m)^{\beta(\omega_{k-1})+\beta(\omega_{k})}}\right)^{-1}\\
	=&\dfrac{\left(\beta(\omega_{k-1})+\beta(\omega_{k})+\sum_{j=2}^{\nu(\omega_{k-1})}b_j-1\right)!}{\left(\beta(\omega_{k-1})+\beta(\omega_{k})-1\right)!\prod_{j=2}^{\nu(\omega_{k-1})} (b_j)!}\dfrac{(m+1)^{\beta(\omega_{k-1})+\beta(\omega_{k})}}{(\nu(\omega_{k-1})+m)^{\beta(\omega_{k-1})+\beta(\omega_{k})+\sum_{j=2}^{(\omega_{k-1})}b_j}}.
\end{align*}
It is exactly negative multinomial distribution with parameters $\beta(\omega_{k-1})+\beta(\omega_k)$ and $1/(m+\nu(\omega_{k-1})) $. In other words, at $\omega_{k-1}$, we toss a coin with successful probability $(\nu(\omega_{k-1})-1)/(m+\nu(\omega_{k-1}))$. When succeed, we choose one sibling of $\omega_k$ on average and let the random walk move to it. Toss the coin repeatedly until the first failure, which completes one trial. We repeat the procedure until the $(\beta(\omega_{k-1})+\beta(\omega_k) )$-th failure. 

We denote such probability measure by $\tilde{P}_\beta$, that is $\tilde{P}_\beta(\cdot):=\hat{P}_1(\cdot|\beta(\omega_{k-1})+\beta(\omega_k)=\beta)$. Note that $\tilde{P}_\beta(\cdot)$ does not depend on $k$ and $\beta_k$, $\beta_{k-1}$. Hence we may and will assume that $k=1$ when dealing with $\tilde{P}_\beta(\cdot)$. 

Let $(u_j)_{2\leq j\leq \nu(\rho)}$ be the siblings of $\omega_1$. Define the random variable $L^1_0:=\sum_{j=2}^{\nu(\rho)}L^1_{u_j}$, where $L^1_{u_j}$ is the number of type 1 children of the root in the subtree $\TT_{u_j}$. We see that $L^1_{u_j}$ is independent of each other and only depends on $\beta(u_j)$.

\begin{proposition}\label{Prop_main}
The random variable $L_0^1$ is regularly varying, namely
	\begin{equation*}
	\tilde{P}_\beta(L_0^1>x)\sim \frac{C^\prime_\kappa \beta }{m(m+1)^{\kappa-1}}l(x)x^{-(\kappa-1)},
	\end{equation*}
	where $C^\prime_\kappa={\Gamma(\kappa)\kappa}/{(\kappa-1)}$.
\end{proposition}
In order to prove the Proposition \ref{Prop_main}, we need fine understanding of the behaviour of $(L^1_{u_j})_{2\leq j\leq \nu(\rho)}$. Under $\tilde{P}_\beta$, for $u$ as a sibling of $\omega_1$, $\beta(u)$ has the law of the sum of $\beta$ geometric random variables of parameter ${1}/({m+\nu(\rho)})$. Denote by $N+1$ the number of children of the root $\rho$. Let $V:=\sum_{j=2}^{N+1} \beta_j$ be the sum of edge local times of siblings of $\omega_1$, where $\beta_j:=\beta(u_j)$. The next two lemmas show that the regular varying tail of $L^1_0$ comes from $V$ rather than from $(L^1_{u_j})_{2\leq j\leq N+1}$ as in \cite{de2022scaling}.

\begin{lemma}\cite[Lemma 14]{de2022scaling}\label{Lm_MomentofL}
	Let $\alpha\in(0,\kappa-1)$, and $i\geq 1$. There exist a constant $C_\alpha>0$ such that 
	\begin{equation*}
	E_i[(L^1)^{1+\alpha}]\leq C_\alpha i^{1+\alpha}.
	\end{equation*}
\end{lemma}

 From lemma \ref{Lm_MomentofL}, we know that for $u\in\Omega(\omega_1)$, $L^1_u$ has finite moment of order $1+\alpha$ for $\alpha\in(0,\kappa-1)$, since a vertex $u$ not on the spine reproduce with the probability measure $P_{\beta(u)}$.

\begin{lemma}\label{Lm_Sumofbeta}
	For $V$ defined above, we have
	\begin{align*}
		\tilde{P}_\beta(V>x)\sim \frac{C^\prime_\kappa \beta }{m(m+1)^{\kappa-1}}l(x)x^{-(\kappa-1)},
	\end{align*}
	where $C^\prime_\kappa={\Gamma(\kappa)\kappa}/{(\kappa-1)}$.
\end{lemma}
\begin{proof}
	First since the tail distribution of $\nu(\rho)$ is regularly varying and $N=\nu(\rho)-1$, by the Karamata theorem (see, for example, \cite[Theorem 8.1.4]{bingham1989regular}) and many-to-one formula, we have
	\begin{align}\label{eq4.9}
	\hat{\mathbb{P}}(N>x)=\mathbb{E}\left[\frac{\nu}{m}\textbf{1}_{\{N>x\}}\right]\sim \frac{\kappa}{m(\kappa-1)}x^{-(\kappa-1)}l(x).
	\end{align}
	
	Since the $\beta$ trials are taken independently and $V=\sum_{k=1}^{\beta}V^k$, where $V^k$ is the sum of edge local times in the $k$-th trial. We only need to prove that 
	\begin{align*}
		\tilde{P}_1(V>x)\sim \frac{C^\prime_\kappa}{m}\left(\dfrac{1}{m+1}\right)^{\kappa-1}l(x)x^{-(\kappa-1)}.
	\end{align*}
    In fact, since $V^k$ are i.i.d random variable, with regular varying tail of the same order, by Laplace transform \cite[Theorem 8.1.6]{bingham1989regular}, we obtain that the sum of them is also regularly varying with the tail in (\ref{eq4.9}).	
	
    In one trial, when $N$ fixed, $V$ follows the geometric distribution with the failure probability ${(m+1)}/{(m+N+1)}$, namely,
	\begin{align*}
		\tilde{P}_1(V>n|N)=\left(1-\frac{m+1}{m+N+1}\right)^{n+1}.
	\end{align*}
	By averaging over $N$, we get that when $n$ large enough,
	\begin{align*}
		&\tilde{P}_1(V>n)=\hat{\mathbb{E}}[(\tilde{P}_1(V>n|N)]\\
		=&-\int_{x=0}^{\infty}\left(1-\frac{m+1}{m+x+1}\right)^{n+1} \d\hat{\mathbb{P}}(N> x)\\
		=&\int_{x=0}^{\infty}(n+1)\left(1-\frac{m+1}{m+x+1}\right)^{n}\frac{m+1}{(m+x+1)^2}\hat{\mathbb{P}}(N> x) \d x\\
		\sim&\frac{\kappa(m+1)}{m(\kappa-1)}\int_{x=0}^{\infty}n\left(1-\frac{m+1}{m+x+1}\right)^{n}(l(x)x^{-\kappa-1}\wedge 1) \d x,
	\end{align*}
 where we apply (\ref{eq4.9}) in the last line.
 
To calculate the integral, we let $({m+x+1})/({m+1})$ be the new variable. Since $l(x)$ is slowly varying, we see
	\begin{align*}
		&\frac{\kappa(m+1)}{m(\kappa-1)}\int_{x=0}^{\infty}n\left(1-\frac{m+1}{m+x+1}\right)^{n}(l(x)x^{-\kappa-1} \wedge 1)\d x\\
		\sim&\frac{\kappa}{m(\kappa-1)}(m+1)^{-(\kappa-1)}\int_{x=1}^{\infty}n\left(1-\frac{1}{x}\right)^{n}l(x)x^{-\kappa-1} \d x\\
		\sim&\frac{\kappa}{m(\kappa-1)}(m+1)^{-(\kappa-1)}\int_{x=0}^{1}n(1-x)^{n}l\left(\frac{1}{x}\right)x^{\kappa-1} \d x.
	\end{align*}
	Then we evaluate the tail of $\int_{x=0}^{1}n(1-x)^{n}l({1}/{x})x^{\kappa} \d x$. By changing of variable again,
	\begin{align*}
		\frac{n^{\kappa-1}}{l(n)}\int_{x=0}^{1}n(1-x)^{n}l\left(\frac{1}{x}\right)x^{\kappa-1} \d x&=\int_{x=0}^{1}(1-x)^{n}\frac{l\left(\frac{1}{x}\right)}{l(n)}(nx)^{\kappa-1}\d nx\\
		&=\int_{x=0}^{n}\left(1-\frac{x}{n}\right)^{n}\dfrac{l\left(\frac{n}{x}\right)}{l(n)}x^{\kappa-1}\d x
	\end{align*}
	Note that $(1-\frac{x}{n})^{n}\leq e^{-x}$ and for $0<x<n$,
 \begin{align*}
     \dfrac{l\left(\frac{n}{x}\right)}{l(n)}\leq C_l \exp\left\{\int_{n}^{\frac{n}{x}}\epsilon(u)\d u/u\right \} 
 \end{align*}
 from the representation of slowly varying function \cite[Theorem 1.3.1]{bingham1989regular}, where $C_l$ is a constant determined by $l(x)$ and $\epsilon(u)$ goes to $0$ as $u\rightarrow\infty$. We claim that, we can find $\varepsilon>0$ small enough and a constant $C_\varepsilon$ such that for $n$ large enough
 \begin{align*}
     \exp\left\{\int_{n}^{\frac{n}{x}}\epsilon(u)\d u/u\right \} \le C_\varepsilon(x^{\varepsilon}\vee x^{-\varepsilon}).
 \end{align*}
Indeed, since  $\epsilon(u)$ goes to $0$ as $u\rightarrow\infty$, we can find $M,\varepsilon>0$ such that for $u>M$, $|\epsilon(u)|<\varepsilon$. We first consider the case $0<x<1$.  and thus when $n\ge M$,
\begin{align*}
     \exp\left\{\int_{n}^{\frac{n}{x}}\epsilon(u)\d u/u\right \}\le  \exp\left\{\int_{n}^{\frac{n}{x}}\varepsilon\d u/u\right \}\le x^{-\varepsilon}.
\end{align*}
Second, if $1\le x\le \sqrt{n}$, when $n\ge M^2$,
\begin{align*}
     \exp\left\{\int_{n}^{\frac{n}{x}}\epsilon(u)\d u/u\right \}\le  \exp\left\{\int^{n}_{\frac{n}{x}}\varepsilon\d u/u\right \}\le x^{\varepsilon}.
\end{align*}
Finally, if $\sqrt{n}<x<n$,
\begin{align*}
    \exp\left\{\int_{n}^{\frac{n}{x}}\epsilon(u)\d u/u\right \}&\le \exp\left\{\int^{M}_{1}\epsilon(u)\d u/u\right \} \exp\left\{\int^{n}_{M}\epsilon(u)\d u/u\right \}\\
    &\le \exp\left\{\int^{M}_{1}\epsilon(u)\d u/u\right \} n^{\varepsilon}\le \exp\left\{\int^{M}_{1}\epsilon(u)\d u/u\right \} x^{2\varepsilon}.
\end{align*}
Since $\varepsilon$ is arbitrary, we complete the proof of the claim.
 
 To conclude, we get that when $n$ is large enough
	\begin{align*}
		\left(1-\frac{x}{n}\right)^{n}\dfrac{l\left(\frac{n}{x}\right)}{l(n)}x^{\kappa-1}\leq C_lC_\varepsilon(x^{\kappa-1-\varepsilon}\vee x^{\kappa-1+\varepsilon})e^{-x} ,
	\end{align*}
	which is integrable on $(0,\infty)$. Therefore, by the dominated convergence theorem,
	\begin{align*}
		\lim\limits_{n\rightarrow\infty}\int_{x=0}^{n}\left(1-\frac{x}{n}\right)^{n-1}\dfrac{l\left(\frac{n}{x}\right)}{l(n)}x^{\kappa-1}\d x=\int_{x=0}^{\infty}e^{-x}x^{\kappa-1}\d x=\Gamma(\kappa).
	\end{align*}
 Thus we have 
 \begin{align*}     
    \frac{\kappa}{m(\kappa-1)}(m+1)^{-(\kappa-1)}\int_{x=1}^{\infty}n(1-\frac{1}{x})^{n}l(x)x^{-\kappa-1} \d x
	\sim\frac{C^\prime_\kappa}{m}(m+1)^{-(\kappa-1)}n^{-(\kappa-1)}l(n),
 \end{align*}
 where $C^\prime_\kappa={\Gamma(\kappa)\kappa}/{(\kappa-1)}$.
\end{proof}

Let $N_i:=\sum_{2\leq j\leq N+1}\textbf{1}_{\beta(u_j)=i}$ be the number of type $i$ vertices in $\Omega(\omega_1)$. We want to estimate $N_i$ conditioned on $N$ and $V$. Let $a_i={N_i}/{N}$, we immediately see that
\begin{align*}
&\sum_{i=0}^{\infty}a_i=1;\\
&\theta:=\sum_{i=0}^{\infty}ia_i=\frac{V}{N}.
\end{align*}
The next lemma says that the sum of local times of the vertices which are visited for many time is relatively small. Given $N$ and $V$, $\beta$ provides no more information so we omit the subscript.
\begin{lemma}\label{Lm_Klarge}
	For any $\epsilon$>0, we can find some $K=K(\theta,\epsilon)\in\mathbb{N}^*$ such that 
	\begin{equation*}
	\tilde{P}(\sum_{i=K}^{\infty}ia_i\geq \epsilon|N,V)\leq \epsilon.
	\end{equation*} 
\end{lemma}
\begin{proof}
	By Markov inequality, it holds that
	\begin{align*}
	\tilde{P}\left(\sum_{i=K}^{\infty}ia_i\geq \epsilon|N,V\right)\leq \frac{\tilde{E}\left[\sum_{i=K}^{\infty}iN_i|N,V\right]}{N\epsilon}.
	\end{align*}
	Since $\tilde{E}[\sum_{i=K}^{\infty}iN_i|N,V]$ is the sum of edge local times of the vertices which are visited more than $K$ times,
	\begin{align*}
	\tilde{E}\left[\sum_{i=K}^{\infty}iN_i|N,V\right]=\tilde{E}\left[\sum_{j=2}^{N+1}\beta_j1_{\{\beta_j\geq K\}}\Bigg|N,V\right]=N\tilde{E}\left[\beta_21_{\{\beta_2\geq K\}}|N,V\right].
	\end{align*}
	For $\beta_2$ conditioned on $N,V$, it equals $\sum_{i=1}^{V} \Xi_i$ where $\Xi_i$ are i.i.d binomial distribution with probability ${1}/{N}$ to be $1$. Then we can easily deduce that 
	\begin{align*}
	\tilde{E}[\beta_2^2|N,V]=\tilde{E}\left[\sum_{i=1}^V \Xi_i|N,V\right]+\tilde{E}&\left[\sum_{i\neq j}\Xi_i\Xi_j\right]=\frac{V}{N}+\frac{V(V-1)}{N^2}\leq \theta+\theta^2.
	\end{align*}
	Thus, by Markov inequality again,
	\begin{align*}
	\tilde{E}[\beta_2 1_{\{\beta_2\geq K\}}|N,V]\leq \frac{\tilde{E}[\beta_2^2|N,V] }{K}\leq \frac{\theta+\theta^2}{K}
	\end{align*}
	Take $K\geq {(\theta^2+\theta)}/{\epsilon^2}$ and then $\tilde{E}[\beta_21_{\{\beta_2\geq K\}}|N,V]\leq\epsilon^2$. Therefore
	\begin{equation*}
	\tilde{P}\left(\sum_{i=K}^{\infty}ia_i\geq \epsilon|N,V\right)\leq \frac{\tilde{E}[\beta_11_{\{\beta_1\geq K\}}|N,V]}{\epsilon}\leq\epsilon.
	\end{equation*}
\end{proof}

Now we can prove the proposition \ref{Prop_main}.
\begin{proof}[Proof of Proposition \ref{Prop_main}]
	By \cite[Theorem 8.1.6]{bingham1989regular}, it suffices to prove that 
	\begin{align*}
	1=&\liminf\limits_{s\rightarrow 0+}\frac{c_{\kappa}m(m+1)^{\kappa-1} s^{-(\kappa-1)}}{l\left(\frac{1}{s}\right)\beta C_\kappa^\prime}\left(1-\tilde{E}_\beta\left[e^{-sL_0^1}\right]\right)\\=&\limsup\limits_{s\rightarrow 0+}\frac{c_{\kappa}m(m+1)^{\kappa-1} s^{-(\kappa-1)}}{ l\left(\frac{1}{s}\right)\beta C_\kappa^\prime}\left(1-\tilde{E}_\beta\left[e^{-sL_0^1}\right]\right).
	\end{align*}
Here $c_\kappa=\Gamma(2-\kappa)$ is a constant appearing when we apply the Laplace transform. Thus, we only need to prove the upper limit and the lower limit are the same.
	
	\textit{1.Lower limit}
	
First, by following the proof of Lemma \ref{Lm_Sumofbeta}, we claim that we can take $A$ large enough such that for $x$ large enough,
	\begin{align}\label{eq_VAN}
		\tilde{P}_\beta(V>x,V\leq AN)\geq(1-\epsilon)\tilde{P}_\beta(V>x).
	\end{align}
In fact, if $V>x\vee AN$, among the $\beta$ trials, at least in one trial $V>(x\vee AN)/\beta$, which yields 
	\begin{align*}
		\tilde{P}_\beta(V>x\vee AN)\leq \beta \tilde{P}_1(\beta V>x\vee AN).
	\end{align*}
When $N\geq {x}/{A}$, 
\begin{align*}
    \tilde{P}_1(\beta V> AN|N)\leq \left(1-\frac{m+1}{m+N+1}\right)^{\frac{AN}{\beta}}\leq e^{-\frac{A}{\beta(m+1)}}.
\end{align*}
Then there exists a constant $C,\epsilon>0$ such that
	\begin{align*}
		\tilde{P}_1\left(\beta V> AN,N\geq \frac{x}{A}\right)\leq C  e^{-\frac{A}{\beta(m+1)}}\left(\frac{x}{A}\right)^{-(\kappa-1)}l\left(\frac{x}{A}\right)\le \frac{\epsilon}{2\beta}\tilde{P}_\beta(V>x) 
	\end{align*}
 when $A$ large enough by Lemma \ref{Lm_Sumofbeta}. 
 
 When $N< {x}/{A}$, we follow in the steps of Lemma \ref{Lm_Sumofbeta}
	\begin{align*}
		&\tilde{P}_1\left(\beta V> x,N< \frac{x}{A}\right)=-\int_{y=0}^{\frac{x}{A}}\left(1-\frac{m+1}{y+m+1}\right)^{\frac{x}{\beta}}\d \hat{\mathbb{P}}(N>y)\\
		=&o(x^{-(\kappa-1)}l(x))+\dfrac{\kappa}{m(m+1)^{\kappa-1}(\kappa-1)}\int_{y=1}^{\frac{x}{A(m+1)}}\frac{x}{\beta}\left(1-\frac{1}{y}\right)^{\frac{x}{\beta}-1}l(y)y^{-\kappa-1}\d y\\
		=&o(x^{-(\kappa-1)}l(x))+\dfrac{\kappa}{m(m+1)^{\kappa-1}(\kappa-1)}\int^{1}_{y=\frac{A(m+1)}{x}}\frac{x}{\beta}(1-y)^{\frac{x}{\beta}-1}l\left(\frac{1}{y}\right)y^{\kappa-1}\d y\\
		=&o(x^{-(\kappa-1)}l(x))+\left(\frac{x}{\beta}\right)^{-(\kappa-1)}l(x)\dfrac{\kappa}{m(m+1)^{\kappa-1}(\kappa-1)}\int_{y=\frac{A(m+1)}{\beta}}^{\infty}e^{-y}y^{\kappa-1} \d y,
	\end{align*}
where we use the dominated convergence theorem in the last equality again. It is also less than ${\epsilon}\tilde{P}_\beta(V>x)/{2\beta} $ when $A$ is large enough. Hence,
 \begin{align*}
     \tilde{P}_\beta(V>x,V\leq AN)&=\beta\tilde{P}_1\left(\beta V> AN,N\geq \frac{x}{A}\right)+\beta\tilde{P}_1\left(\beta V> x,N< \frac{x}{A}\right)\\
     &\le \frac{\epsilon}{2} \tilde{P}_\beta(V>x) +\frac{\epsilon}{2} \tilde{P}_\beta(V>x)= \tilde{P}_\beta(V>x)
 \end{align*}
which completes the claim.
	
	Then we choose some $K\ge{(A+A^2)}/{\epsilon^2}$. Set $V_K=\sum_{i=0}^{K-1} i N_i$. We first prove that $V_K$ stochastically dominates some random variable with regular variation. Indeed, 
	\begin{align*}
	\tilde{P}_\beta(V_K>x)&\geq \tilde{E}_\beta\left[\textbf{1}_{\{V_K>x, V_K>(1-\epsilon)V\}}\textbf{1}_{\{V\leq AN\}}\right]\\
	&\geq \tilde{E}_\beta\left[\textbf{1}_{\{ V_K>(1-\epsilon)V\}}\textbf{1}_{\{V>\frac{x}{1-\epsilon},V\leq AN\}} \right]\\
	&=\tilde{E}_\beta\left[\tilde{P}_\beta\left[V_K>(1-\epsilon)V|V,N;V\leq AN\right]\textbf{1}_{\{V>\frac{x}{1-\epsilon},V\leq AN\}} \right]\\
	&\geq (1-\epsilon)\tilde{P}_\beta\left(V>\frac{x}{1-\epsilon},V\leq AN\right)\\
 	&\geq (1-\epsilon)^2\tilde{P}_\beta\left(V>\frac{x}{1-\epsilon}\right)\\
	&\sim (1-\epsilon)^{1+\kappa}\frac{C^\prime_\kappa \beta }{m(m+1)^{\kappa-1}} x^{-(\kappa-1)}l(x).
	\end{align*}
	where we use Lemma \ref{Lm_Klarge} in the fourth line,  (\ref{eq_VAN}) in the fifth line and Lemma \ref{Lm_Sumofbeta} in the last line. Thus we can find a positive random variable $V^\prime_K$ (without loss of generality, in the same sample space) with the regularly varying tail
\begin{align*}
   \tilde{P}_\beta(V^\prime_K>x)\sim (1-\epsilon)^{1+\kappa}C^\prime_{\kappa}\beta m^{-1}(m+1)^{-(\kappa-1)}   x^{-(\kappa-1)}l(x)
\end{align*}
and 
\begin{align*}
    \tilde{P}_\beta(V_K >x)\geq \tilde{P}_\beta(V^\prime_K>x).
\end{align*}
Thus $\tilde{E}_\beta[(1-s)^{V_K}]\leq\tilde{E}_\beta[(1-s)^{V^\prime_K}]$ for $s\in (0,1)$.
	
Recall that $E_\beta[L^1]=\beta$ and $L^1_{u_j}$ are i.i.d random variables whose distribution only depends on $\beta(u_j)$. Let us consider the Laplace transform of $L^1_0$. By classifying the children by type, we have
	\begin{align*}
	\tilde{E}_\beta\left[e^{-sL_0^1}\right]=\tilde{E}_\beta\left[e^{-s\sum_{j=2}^{N+1}L^1_{u_j}}\right]=\tilde{E}_\beta\left[\prod_{i=0}^{\infty}(\psi_i(s))^{N_i}\right]\leq \tilde{E}_\beta\left[\prod_{i=0}^{K-1}(\psi_i(s))^{N_i}  \right],	
	\end{align*}
	where $\psi_i(s):=E_i[e^{-sL^1}]\leq 1$ when $i\not=1$, $\psi_1(s)=e^{-s}$. Since $\psi_i$ has derivative $-i$ at $s=0$, we get 
	\begin{align*}
	&\tilde{E}_\beta\left[\prod_{i=0}^{K-1}(\psi_i(s))^{N_i} \right]=\tilde{E}_\beta\left[\prod_{i=0}^{K-1}(e^{-s}+o(s))^{i N_i} \right]\\
 =&\tilde{E}_\beta\left[ (e^{-s}+o(s))^{V_K} \right]
	\leq \tilde{E}_\beta \left[(e^{-s}+o(s))^{V^\prime_K}\right]\\
 = &1-  (1-\epsilon)^{1+\kappa} \frac{C^\prime_\kappa \beta }{c_{\kappa}m(m+1)^{\kappa-1}}s^{\kappa-1}l\left(\frac{1}{s}\right)+o\left(s^{\kappa-1}l\left(\frac{1}{s}\right)\right).
	\end{align*}
	Therefore,
	\begin{equation}\label{eq_lowerbound}
	\liminf\limits_{s\rightarrow 0+}\frac{c_{\kappa}m(m+1)^{\kappa-1} s^{-(\kappa-1)}}{l\left(\frac{1}{x}\right)\beta C_\kappa^\prime}\left(1-\tilde{E}_\beta\left[e^{-sL_0^1}\right]\right)\geq  (1-\epsilon)^{1+\kappa}.
	\end{equation}
	Since $\epsilon$ is arbitrary, we get the lower bound.

\textit{2.Upper limit}

We use the Jensen inequality to obtain the upper limit. 
\begin{align*}
    	 &\tilde{E}_\beta\left[e^{-sL_0^1}\right]
    =\tilde{E}_\beta\left[\prod_{j=2}^{N+1}\tilde{E}_\beta\left[e^{-sL^1_{u_j}}\bigg|\beta(u_j)\right]\right]\\
    \geq&\tilde{E}_\beta\left[\prod_{j=2}^{N+1}e^{-s\tilde{E}_\beta\left[L^1_{u_j}|\beta(u_j)\right]}\right]
     =\tilde{E}_\beta\left[e^{-sV}\right]\\
     =&1- \frac{\beta C^\prime_\kappa}{c_\kappa m(m+1)^{\kappa-1}} s^{\kappa-1}l\left(\frac{1}{s}\right)+o\left(s^{\kappa-1}l\left(\frac{1}{s}\right)\right).
\end{align*}
	
	Therefore, 
	\begin{equation}\label{eq_upperbound}
	\limsup\limits_{s\rightarrow 0+}\frac{c_{\kappa}m(m+1)^{\kappa-1} s^{-(\kappa-1)}}{l\left(\frac{1}{x}\right)\beta C_\kappa^\prime}\left(1-\tilde{E}_\beta\left[e^{-sL_0^1}\right]\right)\leq 1.
	\end{equation}
 From (\ref{eq_lowerbound}) and (\ref{eq_upperbound}), we finish the proof.
\end{proof}

We next show that ${x^{\kappa}}\tilde{P}_\beta(L^1_0>x)/{l(x)}$ is bounded by $C\beta^{\kappa}$ where $C$ is a constant that does not depend on $\beta$. We need the additive martingale as a bridge and prepare two lemmas first.

\begin{lemma}\label{Lemma_W0}\cite[Theorem 2.1]{liu2000generalized}
	Let $W_\infty$ to be the a.s. limit of the positive martingale $(W_k)_{k\geq 1}$, where 
 $$(W_k)_{k\geq 1}:=\left(\sum_{|u|=k}\frac{1}{m^k}\right)_{k\geq 1}.$$ 
 Then $W_\infty$ has a finite moment of order $1+\alpha$ under $\mathbb{P}$ for any $\alpha\in(0,\kappa-1)$.
\end{lemma}
Recall that we denote by $P_n$ (resp. $E_n$) the measure (resp. expectation) of the multi-type Galton-Watson tree conditioned on the root having type $n$.
\begin{lemma}\label{Lemma_WL}
	Let  $\alpha\in(0,\kappa-1)$, we have
	\begin{equation*}
		\lim\limits_{n\rightarrow\infty}E_n\left[\left|\frac{L^1}{n}-W_\infty\right|^{1+\alpha}\right]=0.
	\end{equation*}
Moreover, there exists a positive random variable $Y$ with a finite moment of order $1+\alpha$ and $a_n\rightarrow 0$, such that $a_n Y$ is stochastically greater than $|{L^1}/{n}-W_\infty|$ under $P_n$ for any $n\geq 1$.
\end{lemma}
\begin{proof}
	We combine \cite[ Proposition 6]{de2022scaling} and \cite[ Lemma 27]{de2022scaling} to get the lemma.
\end{proof}

We define $W^{u_j}_\infty$ as the almost sure limit of the positive martingale $(W^{u_j}_k)_{k\geq 1}:=(\sum_{|u|=k+1,u\in T^{u_j} }1/m^k )_{k\geq 1} $ .
\begin{proposition}\label{Prop_uni-tail}
	For $L^1_0$ defined as above, it holds with a constant $C$ that 
	\begin{align*}
		\dfrac{x^{\kappa-1}}{l(x)}\hat{P}_\beta(L^1_0>x)\leq C\beta^{\kappa}.
	\end{align*}

\end{proposition}
\begin{proof}
By the Lemma \ref{Lemma_WL}, we can find i.i.d random variables $Y_{u_j},2\leq j\leq N+1$ with finite $1+\alpha$ moments ($\alpha\in (0,\kappa-1)$) such that ${L^1_{u_j}}/{\beta(u_j)}-W^{u_j}_\infty$ is stochastically dominated by $Y_{u_j}$ under $P_{\beta(u_j)}$. We define $Z_{u_j}:=Y_{u_j}+W^{u_j}_\infty$, then $Z_{u_j}$ are i.i.d random variables with finite $1+\alpha$ moment by Lemma \ref{Lemma_WL} and $L^1_{u_j}$ is stochastically dominated by $\beta(u_j)Z_{u_j}$. Note that the law of $Z_{u_j}$ does not depend on $\beta(u_j)$.

Now let $\beta_k(u_j)$ be the edge local time of $u_j$ in the $k$-th trail. We see that $(\beta(u))_{u\in \Omega(\omega_1)}$ are i.i.d. Thus

\begin{align*}
	\tilde{P}_\beta(L^1_0>x)\leq &\tilde{P}_\beta\left(\sum_{j=2}^{N+1}\beta(u_j)Z_{u_j}>x\right)\\
 \leq &\sum_{k=1}^{\beta}\tilde{P}_1\left(\sum_{j=2}^{N+1}\beta_k(u_j)Z_{u_j}>\frac{x}{\beta}\right) \\=&\beta\tilde{P}_1\left(\sum_{j=2}^{N+1}\beta(u_j)Z_{u_j}>\frac{x}{\beta}\right) .
\end{align*}

Let $m_Z:=E[Z]<\infty$. By the Laplace transform, following the deduction in Proposition \ref{Prop_main}, we see
\begin{align*}
	\tilde{E}_1\left[e^{-s\sum_{j=2}^{N+1}\beta(u_j)Z_{u_j}}\right]\sim\dfrac{C^\prime_\kappa m_Z^{\kappa-1}}{c_\kappa m(m+1)^{\kappa-1}}s^{\kappa-1}l\left(\frac{1}{s}\right).
\end{align*}  
Thus $\sum_{j=2}^{N+1}\beta(u_j)Z_{u_j}$ is also regularly varying and we can find a constant $C$ independent of $\beta$ such that
\begin{align*}
	\beta\tilde{P}_1\left(\sum_{j=2}^{N+1}\beta(u_j)Z_{u_j}>\frac{x}{\beta}\right)\leq C\beta^{\kappa}x^{-(\kappa-1)}l(x).
\end{align*}
\end{proof}

Now, we are ready to prove the main proposition and finish the proof of Theorem \ref{Thm_mian}. 

\begin{proof}[proof of Proposition \ref{Prop_rvofL1}]
    
By Proposition \ref{Prop_main},
\begin{align*}
    \hat{P}_1(L^1_{\Omega(\omega_{k})}>x|\beta(\omega_k),\beta(\omega_{k-1}))\sim \frac{C^\prime_\kappa (\beta(\omega_{k-1})+\beta(\omega_k)) }{m(m+1)^{\kappa-1}}l(x)x^{-(\kappa-1)}. 
\end{align*}
From the spinal decomposition,
\begin{align*}
\lim\limits_{x\rightarrow\infty}\frac{x^{\kappa-1}}{l(x)}\hat{P}_1(L^1>x)=\lim\limits_{x\rightarrow\infty}\frac{x^{\kappa-1}}{l(x)}\hat{E}_1\left[\hat{P}_1\left(\left(\sum_{k=1}^{\hat{\tau}_1}L^1_{\Omega(\omega_{k})}+1\right)>x\Bigg|(\beta(\omega_{k}))_{k}\right)\right].
\end{align*}
Almost surely $\hat{\tau}_1$ is finite and $L^1_{\Omega(\omega_{k})}$ are independent of each other conditioned on $(\beta(\omega_{k}))_{k}$, with regular variation of the same order. By considering the Laplace transform \cite[Theorem 8.1.6]{bingham1989regular}, conditioned on $(\beta(\omega_k))_k$, $\sum_{k=1}^{\hat{\tau}_1}L^1_{\Omega(\omega_{k})}$ is regular varying, that is
\begin{align*}    \hat{P}_1\left(\left(\sum_{k=1}^{\hat{\tau}_1}L^1_{\Omega(\omega_{k})}+1\right)>x\Bigg|(\beta(\omega_{k}))_{k}\right)\sim  \sum_{k=1}^{\hat{\tau}_1} \frac{C^\prime_\kappa (\beta(\omega_{k-1})+\beta(\omega_k)) }{m(m+1)^{\kappa-1}}l(x)x^{-(\kappa-1)}.
\end{align*}
Moreover, we have
\begin{equation*}
\hat{P}_1\left(\sum_{k=1}^{\hat{\tau}_1}L^1_{\Omega(\omega_{k})}>x\Bigg|(\beta(\omega_{k}))_{k}\right)\leq \sum_{k=1}^{\hat{\tau}_1} \hat{P}_{\beta(\omega_{k-1})}\left(L^1_{\Omega(\omega_{k})}>\frac{1}{2k^2}x\right).
\end{equation*}
It implies that 
\begin{align*}
    \frac{x^{\kappa-1}}{l(x)}\hat{P}_1\left(\sum_{k=1}^{\hat{\tau}_1}L^1_{\Omega(\omega_{k})}>x\Bigg|(\beta(\omega_{k}))_{k}\right)\le
C\left(\sum_{k=1}^{\hat{\tau}_1+1} \beta(\omega_{k-1})^{\kappa}k^{2(\kappa-1)+\epsilon}l(k^2)\right),
\end{align*}
where $C$ is a constant by Proposition \ref{Prop_uni-tail}. It is integrable by Lemma \ref{Lm_sumbeta}.

Therefore, by the dominated convergence theorem,
\begin{align*}                                                       
\lim\limits_{x\rightarrow\infty}\frac{x^{\kappa-1}}{l(x)}\hat{P}_1(L^1>x)&=\hat{E}\left[	\lim\limits_{x\rightarrow\infty}\frac{x^{\kappa-1}}{l(x)}\hat{P}_1\left(\left(\sum_{k=1}^{\hat{\tau}_1}L^1_{\Omega(\omega_{k})}+1\right)>x\Bigg|(\beta(\omega_{k}))_{k}\right)\right]\\
&=\frac{C^\prime_\kappa}{m(m+1)^{\kappa-1}}\hat{E}_1\left[\sum_{k=1}^{\hat{\tau}_1} \beta(\omega_{k-1})+\beta(\omega_{k})\right]\\
&=\frac{C^\prime_\kappa}{m(m+1)^{\kappa-1}} 2 \hat{E}_1\left[\sum_{k=1}^{\hat{\tau}_1} \beta(\omega_{k})\right].
\end{align*}
Since $(\beta(\omega_k))_{k\geq 0}$ is a Markov chain with transition probability $\hat{P}(\beta(\omega_k)=j| \beta(\omega_{i-1})=i)=(b_j/b_i)m_{ij}$,  the stationary measure associated with it is $(a_ib_i)_{i\geq 1}=(m^{-i-1}i(m-1)^2)_{i\geq 1}$. By the theory of Markov chains, it follows that
\begin{align*}
    \hat{E}_1\left[\sum_{k=1}^{\hat{\tau}_1} \beta(\omega_{k})\right]=\sum_{i\geq 1} \dfrac{i a_ib_i }{a_1b_1}=\frac{m^2(m+1)}{(m-1)^3}.
\end{align*}

In summary, we get that, as $x\rightarrow\infty$
\begin{equation*}
\hat{P}_1(L^1>x)\sim C_\kappa x^{-(\kappa-1)}l(x),
\end{equation*} 
where 
\begin{align*}
    C_\kappa=\frac{2\Gamma(\kappa)\kappa m}{(m-1)^3(m+1)^{\kappa-2}(\kappa-1)}.
\end{align*}
\end{proof}




\bibliographystyle{amsplain}
\bibliography{scaling}
\nocite{*}
\end{document}